\documentclass[11pt]{amsart}
\usepackage{ae}
\usepackage[all]{xy}
\usepackage{graphicx,amsfonts,amssymb,amsmath}
\usepackage{natbib}

\usepackage{pifont}

 \usepackage{hyperref}
\hypersetup{colorlinks,linkcolor=blue,citecolor=red,}


\usepackage[OT2,T1]{fontenc}
\DeclareSymbolFont{cyrletters}{OT2}{wncyr}{m}{n}
\DeclareMathSymbol{\sha}{\mathalpha}{cyrletters}{"58}
\input cyracc.def


\vfuzz2pt 

 \newtheorem{thm}{Theorem}[section]
 \newtheorem{ques}[thm]{Question}

 \newtheorem{lem}[thm]{Lemma}
 \newtheorem{conj}[thm]{Conjecture}
 \newtheorem{prop}[thm]{Proposition}
 \theoremstyle{definition}
 
 \theoremstyle{remark}
 
 \theoremstyle{remark}
 \newtheorem{rem}[thm]{Remark}
 \numberwithin{equation}{subsection}

 \newcommand{\To}{\longrightarrow}

 \newcommand{\E}{\mathcal{E}}

 \renewcommand{\ker}{\textup{Ker}}
 \newcommand{\Pic}{\textup{Pic}}
 \newcommand{\Br}{\textup{Br}}
 \renewcommand{\H}{\textup{H}}

 \newcommand{\BM}{Brauer\textendash Manin\ }
 \newcommand{\BMo}{Brauer\textendash Manin obstruction\ }
  \newcommand{\eBM}{\'etale-Brauer\textendash Manin\ }

 \renewcommand{\a}{\mathfrak{a}}
 
 \renewcommand{\P}{\mathbb{P}}

 \newcommand{\p}{\mathfrak{p}}

 \newcommand{\A}{\textbf{A}}
 \newcommand{\Q}{\mathbb{Q}}

 \newcommand{\Z}{\mathbb{Z}}
 
 \newcommand{\m}{\mathfrak{m}}
 \renewcommand{\O}{\mathcal{O}}


\begin{document}

\title[Weak approximation]
 {Non-invariance of  weak approximation properties under extension of the ground field}

\author{Yongqi Liang}



\keywords{weak approximation, \BM obstruction, extension of the ground field}
\thanks{\textit{MSC 2010} : 11G35 14G05  14G25 14J20}


\maketitle

\begin{abstract} 
For rational points on algebraic varieties defined over a number field $K$, we study the behavior of the property of weak approximation 
with Brauer\textendash Manin obstruction under extension of the ground field. 
We construct $K$-varieties accompanied with a quadratic extension $L|K$ such that the property holds over $K$ (conditional on a conjecture)  while fails over $L$. The result is unconditional when $K=\Q$ or $K$ is one of several quadratic number fields.
Over $\Q$, we give an explicit example.
\end{abstract}


\section{Introduction}
Historically, Chinese remainder theorem has been extended to weak approximation on algebraic groups and on more general algebraic varieties.
Let $K$ be a number field. We consider smooth proper algebraic varieties $X$ defined over $K$. \emph{Weak approximation} means that the image of the diagonal embedding of rational points $ X(K)\subset\prod_{v} X(K_{v})$ is dense with respect to the product topology of $v$-adic topologies.
Analytic methods (e.g. Hardy\textendash Littlewood circle method) and cohomological obstructions have been developed to the study of weak approximation.

In 1970s, Manin defined the so-called Brauer\textendash Manin pairing between $\prod_{v}X(K_{v})$ and the cohomological  Brauer group $\Br(X)=\H^{2}_{\mbox{\scriptsize\'et}}(X,\mathbb{G}_{m})$. The \emph{Brauer\textendash Manin set} $[\prod_{v}X(K_{v})]^{\Br}$ is the subset of families of local points that are orthogonal to the Brauer group. The inclusions $X(K)\subset[\prod_{v}X(K_{v})]^{\Br}\subset\prod_{v}X(K_{v})$ obstruct weak approximation if the second inclusion is strict. We say that $X$ verifies \emph{weak approximation with Brauer\textendash Manin obstruction} if $X(K)$ is dense in the Brauer\textendash Manin set. Further obstructions to weak approximation have also been  studied, for example the \eBM or equivalently the descent obstruction. Similarly, strong approximation properties with respect to the adelic topology on not necessarily proper varieties are also considered in the literature.

The following natural question is of general interest for a property P on a variety. 
In this paper we will discuss arithmetic cases in which the property P is one of the following:
\newline \indent - weak approximation;
\newline \indent - weak approximation with \BM obstruction;
\newline \indent - weak approximation with \eBM obstruction.
\begin{ques}\label{mainquestion}
If $X$ verifies a property P, does $X\otimes_{K}L$ verify the same property under any finite extension $L$ of the ground field $K$?
\end{ques}
To the knowledge of the author, for weak approximation properties it has not yet been seriously discussed in the literature.

For \emph{strong} approximation with \BM obstruction, in a collaboration of Y. Cao, F. Xu and the author \cite[the paragraph after Corollary 8.2]{CLX}, we found   examples of punctured Abelian varieties to give a negative answer to the question  over $K=\Q$, and over an arbitrary number field conditionally on the finiteness of the Tate\textendash Shafarevich group of certain Abelian varieties. 
However, if the concerned Tate\textendash Shafarevich groups are finite, such examples can \textbf{not} be served as  a negative answer to the question for \emph{weak} approximation with \BM obstruction according to the Cassels\textendash Tate exact sequence for Abelian varieties and the purity theorem for the Brauer group. Nevertheless, we may expect the answer to be negative.

Leading to a negative answer to the question for weak approximation with \BM obstruction (off $\infty$),
it will not be easy to find examples within the following families of smooth proper varieties:  
\newline \indent - curves; 
\newline \indent - rational surfaces, or even rationally connected varieties.
\newline Indeed, respectively by V. Scharaschkin \cite{Scharaschkin}, A. Skorobogatov \cite[\S 6.2]{Skbook}, M. Stoll \cite[Conjecture 9.1]{Stoll07}, by J.-L. Colliot-Th\'el\`ene and J.-J. Sansuc \cite[page 174]{CT03}, the \BMo is suggested or conjectured to be the only obstruction to weak approximation over \emph{arbitrary} number fields for varieties in each family above. A. Skorobogatov \cite{Sk-beyond}, B. Poonen \cite{Poonen}, and many others \cite{HarpazSk,CTPSk,Smeets14} found varieties which do not verify weak approximation with \BM (or further) obstruction(s). But they did not consider the (non-)invariance of such properties under extension of the ground field.

Concerning weak approximation properties, to avoid the case of less interest, varieties appeared in this paper admit at least one rational point over the ground field.

\textbf{Main results.}
The  purpose of this paper is to construct examples to  answer negatively to the question. 
First of all, making use of the \BM obstruction, we construct Ch\^atelet surfaces  over  an arbitrary number field $K$ such that they verify weak approximation over $K$ but fail weak approximation over a quadratic extension $L$ of $K$, cf. Theorem \ref{exWA}.
Secondly, using such Ch\^atelet surfaces with the idea of B. Poonen \cite{Poonen}, we construct varieties that verify weak approximation with \BMo (or further obstructions) over $K$ but fail the same property over $L$.

\begin{thm}[Theorem \ref{exBMWA}]\label{mainthm}
Let $K$ be a number field. There exists a 3-fold $X$ over $K$ and a quadratic extension $L$ of $K$ such that 
\newline \indent - $X(\A_{K})^{\Br}\neq\emptyset$, and  if Conjecture \ref{Stollconj} is assumed to hold over $K$ then $X_{K}$ verifies weak approximation with \BMo off $\infty_{K}$;
\newline \indent - $X_{L}$ does not verify weak approximation with \BM obstruction off $\infty_{L}$. 
\end{thm}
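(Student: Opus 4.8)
\emph{Strategy.} The plan is to run B. Poonen's fibration construction \cite{Poonen}, using the Ch\^atelet surfaces of Theorem \ref{exWA} as special fibres and transporting their non-invariance under ground-field extension to the total space. Fix a Ch\^atelet surface $V$ over $K$ and a quadratic extension $L|K$ as in Theorem \ref{exWA}, so that $V$ verifies weak approximation over $K$ while $V_L$ does not. Since the \BMo is the only obstruction to weak approximation on a Ch\^atelet surface (Colliot-Th\'el\`ene\textendash Sansuc\textendash Swinnerton-Dyer), the failure over $L$ is witnessed by a quaternion Azumaya algebra $\mathcal{A}\in\Br(V_L)$, a finite set $T$ of finite places of $L$, and local points $z_{w}\in V_L(L_{w})$ $(w\in T)$ with $\sum_{w\in T}\mathrm{inv}_{w}\mathcal{A}(z_{w})\neq 0$ (that the obstruction can be located at finitely many finite places is part of what Theorem \ref{exWA} delivers). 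Following \cite{Poonen}, choose a smooth projective curve $C$ over $K$ of genus $\ge 2$ with $C(K)=\{t_{0}\}$, and a Ch\^atelet surface bundle $\pi\colon X\to C$ (smooth and projective after a resolution of singularities, a $3$-fold) whose fibre over $t_{0}$ is $V$, whose generic fibre is a Ch\^atelet surface of the same type as $V$, and whose defining polynomial is chosen so that $\Br(X)=\pi^{*}\Br(C)$ and $\Br(X_L)=\pi^{*}\Br(C_L)$; in particular no Brauer class obstructing weak approximation on a Ch\^atelet fibre extends to the total space.

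\emph{Over $K$.} As $V(K)\neq\emptyset$ and $t_{0}\in C(K)$ we get $X(K)\neq\emptyset$, so $X(\A_K)^{\Br}\neq\emptyset$ unconditionally. Now assume Conjecture \ref{Stollconj} over $K$, let $(x_{v})_{v}\in X(\A_K)^{\Br}$, and let $S$ be a finite set of finite places. Since $\pi^{*}\Br(C)\subset\Br(X)$, the image $(\pi(x_{v}))_{v}$ lies in $C(\A_K)^{\Br(C)}$, hence by Conjecture \ref{Stollconj} in the closure of $C(K)=\{t_{0}\}$; therefore $\pi(x_{v})=t_{0}$ for all $v$, so $x_{v}\in V(K_{v})$. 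As $V$ verifies weak approximation over $K$, there is $y\in V(K)\subset X(K)$ arbitrarily close to $x_{v}$ for $v\in S$. Thus $X(K)$ is dense in $X(\A_K)^{\Br}$ off $\infty_K$.

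\emph{Over $L$.} The point $t_{0}$ remains $L$-rational and $X_{t_{0}}\otimes_{K}L=V_L$ fails weak approximation over $L$ through $\mathcal{A}$. Since $\mathcal{A}$ extends to a quaternion algebra over a Zariski neighbourhood of $V_L$ in $X_L$ (although, by the choice of $\pi$, not over all of $X_L$), its restriction to $X_{t,L}$ obstructs weak approximation over $L$ in the same way for every $t\in C_L(L)$ close enough to $t_{0}$ (and over the places of $T$). Because $\mathcal{A}$ is not in the image of $\Br(X_L)=\pi^{*}\Br(C_L)$ in $\Br(V_L)$, the condition $\sum_{w\in T}\mathrm{inv}_{w}\mathcal{A}(\,\cdot\,)\neq 0$ is independent of orthogonality to $\Br(X_L)$; hence one may pick an adelic point $(x_{v})_{v}\in X_L(\A_L)$ lying entirely in the fibre $V_L$ (so $\pi(x_{v})=t_{0}$, orthogonal to $\pi^{*}\Br(C_L)$), orthogonal to $\Br(X_L)$, and with $x_{w}$ near $z_{w}$ for $w\in T$. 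Then $(x_{v})_{v}\in X_L(\A_L)^{\Br}$, yet its open neighbourhood in $X_L(\A_L)$ defined by ``$\pi(\,\cdot\,)$ near $t_{0}$'' and ``fibre component near $z_{w}$ for $w\in T$'' contains no $L$-point of $X_L$: any such point would lie over some $t\in C_L(L)$ near $t_{0}$ inside the region of $X_{t,L}$ excluded above. Hence $X_L(L)$ is not dense in $X_L(\A_L)^{\Br}$, i.e. $X_L$ fails weak approximation with \BMo off $\infty_L$.

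\emph{Main difficulty.} The technical heart, as in Poonen's method, is the construction and Brauer-group analysis of $\pi\colon X\to C$: the defining polynomial of the Ch\^atelet fibration must be chosen so that $\Br(X)$ and $\Br(X_L)$ reduce to $\pi^{*}\Br(C)$ and $\pi^{*}\Br(C_L)$ — equivalently, so that the Brauer classes witnessing the failure of weak approximation on the Ch\^atelet fibres do not propagate to the total space; otherwise the adelic \BM set of $X_L$ would already be too small and the argument over $L$ would break down. A secondary point is to preserve the type of this polynomial under the specialisations that occur, so that the fibres uniformly inherit the behaviour of $V$ in Theorem \ref{exWA} — weak approximation over $K$ and its failure over $L$.
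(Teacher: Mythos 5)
Your overall strategy coincides with the paper's: plug the Ch\^atelet surface $V$ of Theorem \ref{exWA} into Poonen's fibration over a curve with finitely many rational points, and read off the non-invariance of weak approximation with \BMo from the fibre over $\infty$. The conditional argument over $K$ is essentially the paper's. However, your argument over $L$ has two real gaps, and there is a third, more silent, omission in the set-up.

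First, you do not actually need (and do not establish) the strong Brauer-group statement $\Br(X_L)=\pi^{*}\Br(C_L)$; Poonen's Theorem 7.2 proves the weaker but sufficient inclusion $Y_{\infty}(\A_L)\times C(L)\subset X(\A_L)^{\Br}$ directly. Your phrase ``because $\mathcal{A}$ is not in the image of $\Br(X_L)=\pi^{*}\Br(C_L)$, the condition $\sum_{w\in T}\operatorname{inv}_w\mathcal{A}(\cdot)\neq 0$ is independent of orthogonality to $\Br(X_L)$'' is not an argument: orthogonality conditions to distinct subgroups of $\Br$ need not be independent, and the Brauer group of Poonen's total space is not literally $\pi^{*}\Br(C_L)$ in general. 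Replacing this by a direct appeal to Poonen's result on the \BM set avoids the issue entirely.

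Second, your exclusion step over $L$ is not sound as written. You argue that $\mathcal{A}$ extends to a Zariski neighbourhood of $V_L$ and that nearby fibres $X_{t,L}$ ``obstruct in the same way'', so any $L$-point near your adelic point is killed. This requires a local-constancy / continuity statement about the \BM evaluation as $t$ varies that you have not proved (and it is delicate precisely because the Brauer class degenerates on some fibres). The paper's route sidesteps this completely: since $C(L)$ is \emph{finite}, it is discrete in $\prod_{w\in S}C(L_w)$, so any $L$-point of $X$ approximating your adelic point at places in $S$ must lie exactly over $t_0$, i.e. inside the fibre $V_L$ itself; one then contradicts the failure of weak approximation off $\infty_L$ on $V_L$ (Proposition \ref{prop2}). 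This is why genus $\ge 2$ buys something concrete (Faltings gives $C(L)$ finite, not merely $C(K)$), and why you should use it for the discreteness step rather than for a neighbourhood argument on fibres.

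Third, for the construction to descend to $K$ while controlling what happens over $L$, one must choose the morphism $\gamma:C\to\P^{1}$ \emph{over $K$} so that \emph{all of $C(L)$}, not just $C(K)$, maps to $\infty$, and so that $\gamma$ is \'etale over the branch locus $R$ of $Z'\to\P^{1}$. You write $C(K)=\{t_0\}$ and never mention the $L$-points; if $C(L)$ contains a point $t\neq t_0$ not mapped to $\infty$, the fibre over $t$ is a different Ch\^atelet surface and your exclusion breaks. The paper handles this by Riemann\textendash Roch: pick $\phi\in K(C)^{*}$ whose pole set contains the finite subset $C(L)$, and post-compose with a linear automorphism to guarantee \'etaleness over $R$. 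This step should be made explicit in your write-up.
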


For a general $K$, the first  conclusion is conditional. When $K=\Q$ or some quadratic number fields  ($\Q(\sqrt{-1})$, $\Q(\sqrt{3})$ etcetera) our result is unconditional.
In particular, we give an explicit example over $\Q$ in \S \ref{Qexample}. It is given by gluing affine pieces, one of which is defined by the equations
$$\left\{ \begin{array}{rcl}
y^{2}-17z^{2} &=&137\left[(1+y'^{2})^{2}(x^{4}+10x^{2}-155) - x'^{2}y'^{2}(x^{4}-155) \right]\\
y'^{2} &=& x'^{3}-4x'
\end{array} \right. $$
in $(x,y,z,x',y')\in\mathbb{A}^5$. It satisfies unconditionally the conclusions of Theorem \ref{mainthm} with $L = \Q(\sqrt{5})$.

\section{Preliminaries}\label{preliminaries}
In this paper, $K$ is always a number field. 
For completeness, we recall some results in class field theory.  All statements in this section are well known.

We denote by $\Omega_{K}$ the set of places of $K$ and by $\infty_{K}\subset\Omega_{K}$ the subset of archimedean places. For $v\in\Omega_{K}$, $K_{v}$ is the completion of $K$. The ring of ad\`eles (resp. ad\`eles without archimedean components) is denoted by $\A_{K}$ (resp. $\A_{K}^{\infty}$). The natural projection is denoted by $pr^{\infty}:\A_{K}\to\A_{K}^{\infty}$.

If an element $a$ in the ring of integer $\O_{K}$ generates a prime ideal, we will denote the prime ideal and the associated valuation  respectively by $\p_{a}$ and $v_{a}$.

Applying the Chebotarev density theorem and global class field theory to a ray class field, we obtain the following generalised Dirichlet's theorem on arithmetic progressions.

\begin{prop}\label{dirichlet}
Let $\a_{i}\subset\O_{K}(i=1,\ldots,s)$  be ideals that are pairwise prime to each other. Let $x_{i}\in\O_{K}$ be an element that is invertible in $\O_{K}/\a_{i}$.
Then there exists a principal prime ideal $\p=(p)\subset\O_{K}$ such that 
\begin{itemize}
\item[-] $p\equiv x_{i}\mod \a_{i}$ for all $i$;
\item[-] $p$ is positive and sufficiently large with respect to all real places of $K$.
\end{itemize}
In fact, the Dirichlet density of such principal prime ideals is positive.
\end{prop}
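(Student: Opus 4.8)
The plan is to collapse the finitely many congruence conditions into a single one by the Chinese remainder theorem, and then to realize the desired primes as a single Frobenius class in a suitable ray class field, to which the Chebotarev density theorem applies.

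First I would reduce to one congruence. Since the $\a_{i}$ are pairwise coprime, $\O_{K}/\prod_{i}\a_{i}\cong\prod_{i}\O_{K}/\a_{i}$, so there is $x\in\O_{K}$ with $x\equiv x_{i}\bmod\a_{i}$ for every $i$; because each $x_{i}$ is invertible modulo $\a_{i}$, the class of $x$ is a unit modulo $\a:=\prod_{i}\a_{i}$, i.e. $(x)$ is coprime to $\a$. Picking any nonzero $a\in\a$, the element $a^{2}\in\a$ is totally positive, so after replacing $x$ by $x+na^{2}$ with $n$ large we may assume moreover that $x$ is totally positive (positive at every real place of $K$). It then suffices to produce a set of positive Dirichlet density of principal primes $\p=(p)$ with $p$ totally positive and $p\equiv x\bmod\a$.

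Next I would introduce the modulus $\m=\a\cdot\prod_{v}v$, the product taken over all real places $v$ of $K$, the associated finite ray class group $\mathrm{Cl}_{\m}(K)$, and the ray class field $H_{\m}/K$, with the Artin isomorphism $\mathrm{Cl}_{\m}(K)\cong\Gal(H_{\m}/K)$ carrying the class of a prime $\mathfrak{q}\nmid\m$ to $\mathrm{Frob}_{\mathfrak{q}}$. The key point is the following equivalence for a prime $\p\nmid\m$: the ideal $\p$ admits a totally positive generator $p$ with $p\equiv x\bmod\a$ if and only if $[\p]=[(x)]$ in $\mathrm{Cl}_{\m}(K)$. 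Indeed, if $[\p]=[(x)]$ then $\p(x)^{-1}=(\gamma)$ for some totally positive $\gamma\in K^{\times}$ congruent to $1$ modulo $\a$ in the sense of the modulus $\m$; then $p:=\gamma x$ lies in $\O_{K}$, generates $\p$, is totally positive, and satisfies $p\equiv x\bmod\a$ since $(x)$ is prime to $\a$. Conversely, a generator $p$ as above gives $\gamma:=p/x$ witnessing $[\p]=[(x)]$. Thus the primes we want are precisely those with $[\p]=[(x)]$, equivalently those with $\mathrm{Frob}_{\p}$ equal to the fixed element of $\Gal(H_{\m}/K)$ attached to $[(x)]$; by the Chebotarev density theorem for the abelian extension $H_{\m}/K$ these have Dirichlet density $1/[H_{\m}:K]=1/|\mathrm{Cl}_{\m}(K)|>0$.

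It remains to make $p$ large at the real places. For a generator $p$ of $\p$ the product formula gives $\prod_{v\mid\infty}|\sigma_{v}(p)|^{[K_{v}:\R]}=N(\p)$, and $N(\p)\to\infty$ along the primes just found. The totally positive units of $K$ form a finite-index subgroup of $\O_{K}^{\times}$, hence by Dirichlet's unit theorem a full-rank lattice in the trace-zero hyperplane; multiplying a totally positive generator of $\p$ by an appropriate such unit produces a (still totally positive) generator all of whose archimedean absolute values are comparable to $N(\p)^{1/[K:\Q]}$, hence all larger than any prescribed bound once $N(\p)$ is large. Discarding the finitely many primes of small norm does not change the density, and unwinding the Chinese remainder reduction yields the principal primes $\p=(p)$ with $p\equiv x_{i}\bmod\a_{i}$ for all $i$ and $p$ positive and arbitrarily large at the real places. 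The only genuinely delicate step is the equivalence above — matching the precise "congruence plus total positivity" condition on a generator with membership in a single ray class, and hence with one Frobenius class; everything else is the Chinese remainder theorem, the Chebotarev density theorem, and Dirichlet's unit theorem.
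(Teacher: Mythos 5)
Your proof follows the same route as the paper's: reduce the congruences via the Chinese remainder theorem, pass to a ray class field with modulus $\m=\a\prod_{v\mid\infty}v$, and apply the Chebotarev density theorem to a single ray class. Your equivalence between the condition $[\p]=[(x)]$ in $\mathrm{Cl}_\m(K)$ and the existence of a totally positive generator $p\equiv x\bmod\a$ is a cleaner rephrasing of the paper's step ``$\p=(x_0)\cdot(x)$ for some $x\in K_{\m,1}$, so take $p=x_0x$,'' and that part of the argument is correct.

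There is, however, one genuine imprecision in your final step on the archimedean condition. After producing a totally positive generator $p$ of $\p$ with $p\equiv x\bmod\a$, you balance its archimedean absolute values by multiplying by a totally positive unit $u$. But a totally positive unit need not be congruent to $1$ modulo $\a$, so the new generator $up$ may fail the congruence $up\equiv x\bmod\a$, which you nonetheless assert at the end. The fix is immediate: replace the group of totally positive units by the group $U_\m$ of units that are totally positive \emph{and} congruent to $1$ modulo $\a$. This $U_\m$ is the kernel of the natural map $\O_K^\times\to(\O_K/\a)^\times\times\{\pm1\}^{r_1}$ with finite target, hence is still of finite index in $\O_K^\times$, so its logarithmic image remains a full-rank lattice in the trace-zero hyperplane and your balancing argument goes through verbatim; multiplication by $u\in U_\m$ preserves both total positivity and the congruence. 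With that correction your proof is complete. (For what it is worth, the paper disposes of the archimedean condition with the phrase ``one checks,'' which is itself terse and implicitly relies on a similar manipulation with units, so your version actually makes this point more explicit -- it just needs the coprimality constraint on the units added.)
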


\begin{proof}
Let $\m_{0}=\prod_{i}\a_{i}=\bigcap_{i}\a_{i}$. According to the Chinese Remainder Theorem, there exists an element $x_{0}\in\O_{K}$ invertible in $\O_{K}/\m_{0}$ such that $x_{0}\equiv x_{i}\mod \a_{i}$ for all $i$. Furthermore, one can require  that $x_{0}$ is positive and sufficiently large with respect to all real places of $K$. 

Let $\m=\m_{0}\m_{\infty}$ be a modulus of $K$, where $\m_{\infty}$ is the product of all real places of $K$. Let $K_{\m_{}}$ be the ray class field of $K$ of modulus $\m$. Let $I^{\m}$ be the group of fractional ideals of $K$ that are prime to $\m_{0}$. Let $\psi:I^{\m}\to \textup{Gal}(K_{\m}|K)$ be the Artin reciprocity map. 
Define $K_{\m,1}$ to be the set of all elements $a\in K^{*}$ such that $v_{\p}(a-1)\geq v_{\p}(\m_{0})$ for all places $\p|\m_{0}$ and such that $a$ is positive with respect to all real places of $k$. The ray class group $C_{\m}$ is the cokernel of $i:K_{\m,1}\to I^{\m}$ which associates to an element $a\in K_{\m,1}$ the principal ideal $(a)$. Global class field theory says that $\psi$ is surjective with kernel $\ker(\psi)=i(K_{\m,1})$, cf. \cite[Chapter VI]{NeukirchANT}.  According to the Chebotarev density theorem \cite[Theorem VII.13.4]{NeukirchANT}, there exists a prime ideal $\p\nmid\m_{0}$ mapping to $\psi((x_{0}))\in\textup{Gal}(K_{\m}|K)$. Then $\p=(x_{0})\cdot(x)\in I^{\m}$ for a certain $x\in K_{\m,1}$. One checks that  $p=x_{0}\cdot x$ satisfies the conditions. Such prime ideals form a subset of $\Omega_{K}$ of positive Dirichlet density.
\end{proof}

\begin{lem}\label{hilbertsymbol}
Let $v$ be an odd place of $K$. For $s,t\in\O_{K}$ with $v(s)=0$, then the Hilbert symbol $(s,t)_{v}=-1$ if and only if $v(t)$ is odd and $s$ is not a square modulo $v$.
\end{lem}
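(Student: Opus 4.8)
The plan is to reduce to the completion $K_v$ and to read the value of the Hilbert symbol off the standard structure of $K_v^{*}/(K_v^{*})^{2}$, which is transparent precisely because $v$ is odd. Write $\mathcal{O}_v$ for the valuation ring of $K_v$, $\mathfrak{m}_v$ for its maximal ideal, $\pi$ for a uniformizer, and $k(v)$ for the residue field, of odd cardinality. I will use freely that $(a,b)_v$ is symmetric, bi-multiplicative, and unchanged when $a$ or $b$ is multiplied by a square of $K_v^{*}$.

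The first step is to show that units pair trivially: $(a,b)_v = 1$ whenever $v(a)=v(b)=0$. Reducing the ternary form $aX^{2}+bY^{2}-Z^{2}$ modulo $\mathfrak{m}_v$ gives a nondegenerate quadratic form over the finite field $k(v)$, which has a nontrivial zero (e.g. by Chevalley--Warning); since the form is nondegenerate and $\mathrm{char}\,k(v)\neq 2$, such a zero is a smooth point, so Hensel's lemma lifts it to a nontrivial zero over $\mathcal{O}_v$ and $(a,b)_v=1$. In the same vein, a unit of $\mathcal{O}_v$ is a square in $K_v^{*}$ if and only if its reduction is a square in $k(v)^{*}$.

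The second step is to compute $(u,\pi)_v$ for a unit $u$, with the claim that it equals $-1$ exactly when $u$ is not a square modulo $v$. If $\bar u$ is a square, then $u$ is a square in $K_v^{*}$, so $(u,\pi)_v=1$. Conversely, given a solution of $uX^{2}+\pi Y^{2}=Z^{2}$, rescale so that $\min(v(X),v(Y),v(Z))=0$; reduction modulo $\mathfrak{m}_v$ yields $\bar u\bar X^{2}=\bar Z^{2}$, so if $\bar X\neq 0$ then $\bar u$ is a square. In the remaining case $\bar X=\bar Z=0$, which forces $v(Y)=0$ and $v(X),v(Z)\geq 1$; but then $v(\pi Y^{2})=1$ while $v(Z^{2}-uX^{2})\geq 2$, a contradiction. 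Hence no nontrivial solution exists and $(u,\pi)_v=-1$.

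Finally, writing $t=\pi^{n}t_{0}$ with $n=v(t)$ and $t_{0}\in\mathcal{O}_v^{*}$, bi-multiplicativity together with the two steps above gives $(s,t)_v=(s,\pi)_v^{\,n}(s,t_{0})_v=(s,\pi)_v^{\,n}$, which is $-1$ precisely when $n$ is odd and $(s,\pi)_v=-1$, that is, when $v(t)$ is odd and $s$ is not a square modulo $v$. There is no genuine obstacle here: this is the classical table of Hilbert symbols at an odd prime, and the only points deserving a little care are the smoothness of the finite-field zero in the first step (where $\mathrm{char}\neq 2$ is used) and the valuation bookkeeping in the converse direction of the second step.
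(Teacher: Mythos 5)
Your argument is correct and complete. The paper itself does not prove this lemma; it simply cites Neukirch, \emph{Algebraic Number Theory}, Proposition V.3.4, which is the standard formula for the tame Hilbert symbol at an odd place. You instead give a self-contained elementary derivation: units pair trivially via Chevalley--Warning and Hensel (this is where oddness of $v$ enters, making the reduced conic nondegenerate and smooth), $(u,\pi)_v$ is computed by the rescaling-and-reduction argument with the valuation bookkeeping handled correctly in the case $\bar X=\bar Z=0$, and the general case follows by bimultiplicativity and invariance under squares. This buys you independence from the cited reference at the cost of a page of standard local computation; the underlying mathematics is the same classical fact, just proved rather than quoted. One small point worth making explicit, though it does not affect correctness: your final reduction silently uses that multiplying $t$ by the square $\pi^{-2\lfloor n/2\rfloor}$ leaves $(s,t)_v$ unchanged, so only the parity of $n=v(t)$ matters, which is exactly what the conclusion reflects.
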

\begin{proof}
It follows from \cite[Proposition V.3.4]{NeukirchANT}.
\end{proof}

\begin{lem}[Quadratic reciprocity law]\label{reciprocity}
Let $s,t\in\O_{K}$ be elements generating odd prime ideals. Assume that either  $s\equiv1\mod8\O_{K}$ or $t\equiv1\mod8\O_{K}$ and assume that for each real place either $s$ or $t$ is positive. Then $s$ is a square modulo $\p_{t}$ if and only if $t$ is a square modulo $\p_{s}$.
\end{lem}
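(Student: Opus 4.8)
The plan is to deduce the statement from the global product formula for Hilbert symbols, $\prod_{v\in\Omega_K}(s,t)_v = 1$, combined with the local computation of Lemma \ref{hilbertsymbol}. First I would observe that ``$s$ is a square modulo $\p_t$'' is, by Hensel's lemma and the fact that $\p_t$ is odd, equivalent to $(s,t)_{v_t} = 1$; here I use that $v_t(s) = 0$ (since $\p_s \neq \p_t$, as both are prime and generated by the given elements — one should note $s,t$ are non-associate, which follows from the congruence and positivity hypotheses preventing $s = \pm t \cdot(\text{unit})$ in the relevant cases, or simply restrict to the case $\p_s \neq \p_t$ since otherwise the statement is vacuous). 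Symmetrically, ``$t$ is a square modulo $\p_s$'' is equivalent to $(s,t)_{v_s} = 1$. So the lemma asserts exactly that $(s,t)_{v_s} = (s,t)_{v_t}$.

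Next I would show that every Hilbert symbol $(s,t)_v$ with $v \notin \{v_s, v_t\}$ is trivial, so that the product formula collapses to $(s,t)_{v_s}\cdot(s,t)_{v_t} = 1$, which together with $(s,t)_v \in \{\pm 1\}$ gives $(s,t)_{v_s} = (s,t)_{v_t}$ and hence the lemma. For a finite place $v$ with $v \neq v_s, v_t$: if $v$ is odd then $v(s) = v(t) = 0$ (as $s$ generates the prime $\p_s \neq \p_v$ and likewise $t$), so Lemma \ref{hilbertsymbol} gives $(s,t)_v = 1$ directly. The place $v = v_2$ above $2$ (the dyadic places) requires the hypothesis $s \equiv 1 \bmod 8\O_K$ or $t \equiv 1 \bmod 8\O_K$: a unit congruent to $1$ modulo $8$ in $\O_{K_{v_2}}$ is a square in $K_{v_2}^*$ (by Hensel's lemma applied to $X^2 - s$, since $8 \mid \operatorname{disc}$ considerations work out — more precisely $1 + 8\O_{K_{v_2}} \subseteq (K_{v_2}^*)^2$ when $v_2$ has ramification index allowing it; in the number field setting one invokes the standard fact that $\O_{K_{v_2}}^{*2} \supseteq 1 + 4\pi\O_{K_{v_2}}$ for a uniformizer $\pi$, and $8\O_K \subseteq 4\pi \O_{K_{v_2}}$ when $e(v_2) \geq 1$ — I would state this carefully as a preliminary remark), hence $(s,t)_{v_2} = 1$. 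For a real place $v$: by hypothesis $s > 0$ or $t > 0$ in $K_v = \R$, so $(s,t)_v = 1$. For a complex place $(s,t)_v = 1$ trivially.

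The main obstacle I expect is the dyadic place: one must pin down exactly which congruence condition ($\bmod\ 8$, or $\bmod\ 4\pi$, etc.) guarantees that a local unit is a square, and this depends on the ramification of $2$ in $K$. The cleanest route is to prove a short preliminary sublemma: if $u \in \O_K$ with $u \equiv 1 \bmod 8\O_K$, then $u$ is a square in $K_{v}^*$ for every dyadic place $v$ of $K$; this follows because $8\O_K \subseteq 2\pi_v\O_{K_v}$ wait — one needs $u - 1 \in (K_v^*)$-square-producing ideal, and the sharp statement $1 + \pi_v^{2e_v+1}\O_{K_v} \subseteq \O_{K_v}^{*2}$ (Hensel) together with $v(8) = 3e_v \geq 2e_v + 1$ for $e_v \geq 1$ does the job. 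Once this sublemma is in hand, the symmetry of the product formula and the bilinearity/symmetry of the Hilbert symbol finish the proof immediately, and everything else is routine bookkeeping over the remaining places.
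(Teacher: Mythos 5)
Your proposal is correct and follows essentially the same route as the paper: translate both conditions into Hilbert symbols at $v_s$ and $v_t$ via Lemma~\ref{hilbertsymbol}, show all other local symbols vanish (odd places away from $s,t$ by Lemma~\ref{hilbertsymbol}, dyadic places by the $\bmod\ 8$ hypothesis and Hensel, archimedean places by the positivity hypothesis), and conclude via the product formula. The only difference is that you spell out the dyadic-place square criterion ($1+\pi^{2e+1}\O_{K_v}\subseteq\O_{K_v}^{*2}$ with $v(8)=3e\geq 2e+1$) where the paper simply writes ``by Hensel's lemma''; your elaboration is accurate and resolves your own worry correctly.
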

\begin{proof}
By Lemma \ref{hilbertsymbol}, $s$ is a square modulo $\p_{t}$ if and only if $(s,t)_{v_{t}}=1$. And $t$ is a square modulo $\p_{s}$ if and only if $(s,t)_{v_{s}}=1$. It suffices to show that $(s,t)_{v_{s}}=(s,t)_{v_{t}}$. For any real place $v$  we have $(s,t)_{v}=1$ by hypothesis. For any $v|2$, either $s$ or $t$ is a square in $K_{v}$ by Hensel's lemma, hence $(s,t)_{v}=1$. For any finite place $v\nmid2st$ we have $(s,t)_{v}=1$ by Lemma \ref{hilbertsymbol}. The statement follows from the product formula $\displaystyle\prod_{v\in\Omega_{K}}(s,t)_{v}=1$.
\end{proof}

\section{Examples for non-invariance of weak approximation}\label{exWAsectiontitle}
V. A. Iskovskikh  \cite{Iskov71} showed that the Ch\^{a}telet surface over $\Q$ given by $$y^{2}+z^{2}=(x^{2}-2)(3-x^{2})$$
violates the Hasse principle. Summarising many others' work, A. Skorobogatov
\cite[page 145]{Skbook} studied variations of such a counterexample to Hasse principle over $\Q$.  B. Poonen  \cite[Proposition 5.1]{Poonen09} further generalised the argument to constructions over any number field $K$. A fortiori these examples do not verify weak approximation over the ground field $K$. In this section, we prove the following statement.

\begin{thm}\label{exWA}
Let $K$ be a number field. There exists a Ch\^{a}telet surface $V$ over $K$ and a quadratic extension $L$ of $K$ such that 
\newline \indent - $V_{K}$ verifies weak approximation and $V(\A_{K})\neq\emptyset$;
\newline \indent - $V_{L}$ does not verify weak approximation. 
\end{thm}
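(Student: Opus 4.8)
The plan is to produce a Châtelet surface $V$ over $K$ defined by an equation of the form $y^{2}-a z^{2} = P(x)$, where $a \in K^{*}$ and $P \in K[x]$ is a separable polynomial of degree $4$, such that the Brauer group $\Br(V)/\Br(K)$ is controlled by a single quaternion class, and such that this class is constant (hence gives no obstruction) over $K$ but becomes non-constant after base change to a well-chosen quadratic extension $L = K(\sqrt{d})$. By the theorem of Colliot-Thélène–Sansuc–Swinnerton-Dyer, a Châtelet surface satisfies weak approximation (with no Brauer–Manin obstruction, and in fact unconditionally because the Brauer–Manin obstruction is the only one for such surfaces) precisely when $\Br(V)=\Br(K)$; so the first bullet will follow once I arrange that $\Br(V_{K})/\Br(K)$ is trivial and that $V(\A_{K})\neq\emptyset$, while the second bullet follows once I arrange that $\Br(V_{L})/\Br(L)$ is nontrivial \emph{and} that the corresponding Brauer–Manin pairing is nonconstant on $V(\A_{L})$, i.e. that there is a genuine obstruction to weak approximation over $L$.

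The key steps, in order, would be: \textbf{(1)} Write $P(x) = c\,(x^{2}-e)(x^{2}-f)$ or more generally factor $P$ over $\bar K$ and recall the standard description (following Skorobogatov and Poonen) of $\Br(V)/\Br(K)$ in terms of the factorization type of $P$ together with the class of $a$; the relevant classes are quaternion algebras $(a, R(x))$ for factors $R$ of $P$. \textbf{(2)} Choose $a$, and the coefficients of $P$, using the generalized Dirichlet theorem (Proposition \ref{dirichlet}) and the quadratic reciprocity law (Lemma \ref{reciprocity}) to control, place by place, the relevant Hilbert symbols (Lemma \ref{hilbertsymbol}), so that over $K$ the candidate Brauer class is everywhere locally (hence globally) constant — the typical trick is to force one irreducible factor of $P$ to be a norm from $K(\sqrt a)$ — while ensuring $V$ has points everywhere locally over $K$. \textbf{(3)} Pick the quadratic extension $L=K(\sqrt d)$ so that over $L$ the polynomial $P$ acquires a new factorization (e.g. an irreducible quartic or quadratic factor splits off a rational or quadratic piece after adjoining $\sqrt d$), which creates a new quaternion class in $\Br(V_{L})$; then check that this class is non-constant by exhibiting two adelic points of $V_{L}$ on which the sum of local invariants differs. \textbf{(4)} Conclude via Colliot-Thélène–Sansuc–Swinnerton-Dyer that $V_{K}$ satisfies weak approximation and $V_{L}$ does not.

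The main obstacle will be Step (2)–(3): simultaneously engineering the arithmetic of $a$ and $P$ so that the Brauer obstruction \emph{vanishes} over $K$ but is \emph{activated} over $L$, while keeping $V(\A_{K})\neq\emptyset$. Concretely, one must choose the branch locus of $P$ and the constant $a$ so that every local Hilbert symbol computation over $K$ collapses — this is where reciprocity (Lemma \ref{reciprocity}) and the freedom to choose auxiliary primes via Proposition \ref{dirichlet} are essential — and then verify that exactly one of these symbols ceases to be controlled after base change to $L$. I expect the cleanest route is to take $P$ reducible over $\bar K$ into two conjugate quadratic factors defined over $K(\sqrt d)=L$ (so over $K$ the relevant class is a norm and dies, but over $L$ the factors split and a nontrivial $(a,\cdot)$ survives), and to pin down $a$ and $d$ by an explicit congruence-and-reciprocity argument; checking non-constancy of the invariant map over $L$ then amounts to evaluating one Hilbert symbol at two carefully chosen places, which is routine once the global setup is in place.
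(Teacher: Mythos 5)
Your proposal matches the paper's approach: the paper also takes $P(x)=b(x^{4}+2cx^{2}+d)$ irreducible over $K$ (so $\Br(V_K)/\Br(K)=0$ and weak approximation holds by Colliot-Th\'el\`ene--Sansuc--Swinnerton-Dyer), chooses $L=K(\sqrt{c^{2}-d})$ so that $P$ splits into the two conjugate quadratics $x^{2}+c\pm\sqrt{D}$ over $L$, activates the class $(a,x^{2}+c+\sqrt{D})\in\Br(V_L)$, and uses Proposition~\ref{dirichlet}, Lemma~\ref{hilbertsymbol}, and Lemma~\ref{reciprocity} to arrange the parameters so that $V(\A_K)\neq\emptyset$ and so that this class has non-constant invariant at the prime of $L$ ramified over $\p_c$. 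Your Step (2) briefly entertains an alternative ``reducible $P$, norm trick'' variant, but your final paragraph commits to exactly the paper's route, and the difference that the paper varies the $x$-coordinate at a single ramified place rather than at two different places is cosmetic.
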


\begin{rem}
For the second conclusion above, we will prove a slightly stronger statement: $V_{L}$ does not verify weak approximation off $\infty_{L}$, cf. Proposition \ref{prop2}, which will be needed in the next section.
\end{rem}

Let $V_{0}\subset\mathbb{A}^{3}$ be the affine surface over $K$ defined by the equation $$y^{2}-az^{2}=b(x^{4}+2cx^{2}+d)\leqno (\star)$$
with $a,b,c,d\in\O_{K}$.
We define a Ch\^{a}telet $K$-surface $V$ as a smooth compactification of $V_{0}$ over $K$.
As the property of weak approximation with \BMo is  birationally invariant between smooth proper geometrically rational varieties \cite[Proposition 6.1]{CTPSk}, the forthcoming discussion will not depend on the choice of the compactification.
We are going to choose the parameters $a,b,c$ and $d$ to ensure  that  $D=c^{2}-d$ is not a square in $K$ and that $V$ verifies the theorem with $L=K(\sqrt{D})$ a quadratic extension of $K$, cf. Propositions \ref{prop1} and \ref{prop2}.

\noindent\textbf{Choice of  the parameters}

Applying Proposition \ref{dirichlet}, we can choose sequentially $a,b,c,e\in\O_{K}$ generating different prime ideals   such that
\begin{enumerate}
\item  $a\equiv1\mod8\O_{K}$ and $a>0$ with respect to all real places; 
\item  $b\equiv1\mod2a\O_{K}$; 
\item $c\equiv1\mod2\O_{K}$ and $c$ is not a  square modulo $\p_a$; 
\item $e$ is not a square modulo $\p_a$.
\end{enumerate}
Set $d=ce$. Then $D=c^{2}-d=c(c-e)$ is not a square in $K$ since $v_{c}(D)=1$.
 We find that 
\begin{itemize}
\item $a$ is a square modulo $\p_{b}$ by Lemma \ref{reciprocity};
\item $a$ is not a square modulo $\p_c$ by Lemma \ref{reciprocity};
\item $bd=bce$ is a square modulo $\p_{a}$ by (2)(3)(4).
\end{itemize}

\begin{prop}\label{prop1}
$V_{K}$ verifies weak approximation and $V_{K}$ admits rational points locally everywhere, i.e. $\overline{V(K)}=V(\A_{K})\neq\emptyset$.
\end{prop}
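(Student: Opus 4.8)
The plan is to invoke the theorem of Colliot-Th\'el\`ene, Sansuc and Swinnerton-Dyer that the \BMo is the only obstruction to the Hasse principle and to weak approximation on a Ch\^atelet surface, after checking that with the chosen parameters there is in fact no \BMo on $V_{K}$, i.e. $\Br(V_{K})=\Br_{0}(V_{K})$. The statement then reduces to proving $V(\A_{K})\neq\emptyset$, which is carried out place by place using conditions (1)--(4).

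\textbf{Step 1: triviality of the Brauer group.} First one checks that $P(x)=x^{4}+2cx^{2}+d$ is irreducible over $K$. Writing $P(x)=(x^{2}+c)^{2}-D$ with $D=c^{2}-d$, a root of $P$ in $K$ would yield $\sqrt{D}\in K$, which is excluded; and a factorisation of $P$ into two monic quadratics over $K$ would force either $D$ or $d=ce$ to be a square in $K$, both excluded since $v_{c}(D)=v_{c}(d)=1$. Hence $P$ is irreducible over $K$. By the standard description of the Brauer group of a Ch\^atelet surface in terms of the factorisation type of $P$ (see e.g.\ \cite{Skbook} and the references therein), irreducibility of $P$ forces $\Br(V_{K})=\Br_{0}(V_{K})$; the point is that $\H^{1}(K,\Pic\overline{V})$ vanishes because producing a non-zero class would require partitioning the four degenerate fibres into two non-empty $\Gal(\overline{K}|K)$-stable subsets, which the transitive action on the four roots of $P$ forbids. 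Consequently $V(\A_{K})^{\Br}=V(\A_{K})$.

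\textbf{Step 2: local points everywhere.} Since $P$ is separable ($P'(x)=4x(x^{2}+c)$ shares no root with $P$, as $P(0)=d\neq0$ and $P(x)=-D\neq0$ when $x^{2}=-c$), the affine surface $V_{0}$ is smooth, so it suffices to produce a point of $V_{0}(K_{v})$ for each $v\in\Omega_{K}$, and we distinguish three cases according to the behaviour of $a$ at $v$. \emph{If $a$ is a square in $K_{v}$}, every fibre of the conic bundle $y^{2}-az^{2}=bP(x)$ is split over $K_{v}$ and $V_{0}(K_{v})\neq\emptyset$ trivially; this disposes of the archimedean places (where $a>0$ by (1)), the places above $2$ (where $a\equiv1\bmod8\O_{K}$ makes $a$ a square in $K_{v}$, as $1+8\O_{K_{v}}\subseteq(K_{v}^{*})^{2}$) and the place $v_{b}$ (where $a$ is a square modulo $\p_{b}$, hence a square in $K_{v_{b}}$ by Hensel). \emph{If $v$ is odd and $a$ is a non-square unit at $v$}, then $K_{v}(\sqrt{a})|K_{v}$ is unramified; for any $x_{0}\in K_{v}$ with $v(x_{0})=-1$ one has $v(bP(x_{0}))=-4$, an even integer, so $bP(x_{0})$ is a norm from $K_{v}(\sqrt{a})$ and the fibre over $x_{0}$ has a $K_{v}$-point; this disposes of $v_{c}$ (where $a$ is a non-square modulo $\p_{c}$) and of every remaining odd place where $a$ is a unit. \emph{If $v=v_{a}$}, then $bP(0)=bd$ is a unit at $v_{a}$ which is a square modulo $\p_{a}$, hence a square in $K_{v_{a}}$ by Hensel, and $(x,y,z)=(0,\sqrt{bd},0)$ lies on $V_{0}(K_{v_{a}})$. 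Therefore $V(\A_{K})\neq\emptyset$.

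\textbf{Conclusion and main obstacle.} Combining the two steps, $V(\A_{K})=V(\A_{K})^{\Br}\neq\emptyset$, so the theorem of Colliot-Th\'el\`ene, Sansuc and Swinnerton-Dyer gives $V(K)\neq\emptyset$ and $\overline{V(K)}=V(\A_{K})$, i.e.\ $V_{K}$ verifies weak approximation. The step I expect to be the main obstacle is Step 1: one must pin down the exact form of the Brauer-group computation for Ch\^atelet surfaces and verify that irreducibility of $P$ really kills $\Br(V_{K})/\Br_{0}(V_{K})$ (the non-zero classes in the reducible case being precisely the quaternion classes $(a,P_{i}(x))$ that will later obstruct weak approximation over $L=K(\sqrt{D})$). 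The local analysis in Step 2 is routine once one observes that $a$ is already a square in $K_{v}$ at all the delicate places ($v\mid2\infty$ and $v_{b}$) and that the fibre ``at infinity'' handles the odd unramified places uniformly.
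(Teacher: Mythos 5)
Your proof is correct and takes essentially the same route as the paper: both arguments reduce weak approximation to the Colliot-Th\'el\`ene\textendash Sansuc\textendash Swinnerton-Dyer theorem for Ch\^atelet surfaces, use irreducibility of $P(x)=x^{4}+2cx^{2}+d$ over $K$ to kill $\Br(V_{K})/\Br_{0}(V_{K})$, and then verify local solvability place by place exactly as you do. The only (cosmetic) deviation is your irreducibility argument: you analyse potential linear and quadratic factors directly and rule them out using $v_{c}(D)=v_{c}(d)=1$, whereas the paper simply notes that $bP(x)$ is Eisenstein at $\p_{c}$; both work for the same underlying reason.
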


\begin{proof}
By Eisenstein's criterion, the polynomial $b(x^{4}+2cx^{2}+d)$ is irreducible over the local field $K_{\p_{c}}$, hence irreducible over $K$. Then $\Br(V_{K})/\Br(K)=0$ and $V_{K}$ satisfies weak approximation \cite[Theorem 8.11]{chateletsurfaces}.

It remains to  show that $V_{0}(K_{v})\neq\emptyset$ for all $v$.

As $bd$ is a square modulo $\p_{a}$, hence it is a square in $K_{v_{a}}$ by Hensel's lemma. By setting $z=x=0$ we get a  $K_{v_{a}}$-point on $V_{0}$. 

For any $v|2$ or $v=v_b$, $a$ is a square in $K_{v}$ by Hensel's lemma, $V_{0}$ is then $K_{v}$-rational. 
$V_{0}$ is also $K_{v}$-rational for all real $v$ since $a$ is positive. 

For any $v\nmid2ab$, if $a$ is a square in $K_{v}$ then $V_{0}$ is $K_{v}$-rational. Suppose that $a$ is not a square in $K_{v}$. Consider $x\in K_{v}$ such that $v(x)<0$, we find that  $v(b(x^{4}+2cx^{2}+d))=4v(x)$ is always even. As the extension $K_{v}(\sqrt{a})|K_{v}$ is unramified, $b(x^{4}+2cx^{2}+d)$ is a norm by  \cite[Corollary V.1.2]{NeukirchANT}, hence $V_{0}$ admits a $K_{v}$-rational point.

In summary $V(\A_{K})\neq\emptyset$.
\end{proof}

\begin{prop}\label{prop2}
$V_{L}$ does not verify weak approximation off $\infty_{L}$, i.e. $V(L)$ is not dense in $V(\A_{L}^{\infty})$.
\end{prop}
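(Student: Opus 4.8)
The plan is to produce a \BM obstruction over $L$ that disappears over $K$. The quartic $x^{4}+2cx^{2}+d$ is irreducible over $K$ (Eisenstein at $\p_{c}$, as in the proof of Proposition \ref{prop1}), so $\Br(V_{K})/\Br(K)=0$; but over $L=K(\sqrt{D})$, where $D=c^{2}-d$, it factors:
$$x^{4}+2cx^{2}+d=(x^{2}+c)^{2}-D=(x^{2}+c-\sqrt{D})(x^{2}+c+\sqrt{D}).$$
Writing $P_{1}(x)=x^{2}+c-\sqrt{D}$ and $P_{2}(x)=x^{2}+c+\sqrt{D}$, so that $y^{2}-az^{2}=bP_{1}(x)P_{2}(x)$ on $V_{0}\otimes_{K}L$, I take as obstructing class the quaternion algebra $\mathcal{A}=(a,P_{1}(x))$; by the standard analysis of Brauer groups of Ch\^{a}telet surfaces \cite{chateletsurfaces} it is unramified on $V_{L}$ and hence defines a class in $\Br(V_{L})$. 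I would not need the precise structure of $\Br(V_{L})/\Br(L)$, since the non-constancy of $\mathcal{A}$ will fall out of the local computation.

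Next I would evaluate $\mathrm{inv}_{w}\mathcal{A}$ over all $w\in\Omega_{L}$. At archimedean places it vanishes: a real place has $L_{w}=\R$ with $a>0$ by condition~(1), so $a\in L_{w}^{\ast 2}$, and $(a,-)_{w}=0$; complex places are trivial. At every \emph{finite} place $w$ I claim $V_{0}(L_{w})$ contains a point $P_{w}$ with $\mathrm{inv}_{w}\mathcal{A}(P_{w})=0$. If $a$ is a square in $L_{w}$ this is immediate, which handles $w\mid 2$ (by $a\equiv 1\bmod 8\O_{K}$) and $w\mid\p_{b}$ (since $a$ is a square modulo $\p_{b}$). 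Otherwise I pick $x_{w}\in L_{w}$ with $v_{w}(x_{w})<0$; then $P_{i}(x_{w})=x_{w}^{2}u_{i}$ with $u_{i}\equiv 1\bmod\m_{w}$, hence $P_{i}(x_{w})\in L_{w}^{\ast 2}$ (here $w\nmid 2$) and $\mathrm{inv}_{w}\mathcal{A}=0$, while the equation $y^{2}-az^{2}=bP_{1}(x_{w})P_{2}(x_{w})=b\cdot(\text{square})$ is solvable over $L_{w}$ because $b$ is a norm from $L_{w}(\sqrt{a})$ --- automatic when $L_{w}(\sqrt{a})/L_{w}$ is unramified, and, at $w\mid\p_{a}$ where $a$ is a uniformiser, because $b\equiv 1\bmod 2a\O_{K}$ makes $b$ a local square by condition~(2). (Outside a finite set of places $\mathrm{inv}_{w}\mathcal{A}$ is in fact identically zero on $V(L_{w})$, by the usual good-reduction argument.)

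At the place $w_{0}$ above $\p_{c}$ the invariant is nontrivial. Since $d=ce$ we have $v_{\p_{c}}(D)=1$, so $\p_{c}$ ramifies in $L$; thus $w_{0}$ is the unique place above $\p_{c}$, $\sqrt{D}$ is a uniformiser of $L_{w_{0}}$, $v_{w_{0}}(c)=2$, and $L_{w_{0}}$ has residue field $\O_{K}/\p_{c}$. Because $a$ is \emph{not} a square modulo $\p_{c}$, the extension $L_{w_{0}}(\sqrt{a})/L_{w_{0}}$ is the unramified quadratic one. Choosing $x_{w_{0}}\in\O_{w_{0}}$ with $v_{w_{0}}(x_{w_{0}})\geq 1$, the $\sqrt{D}$-term dominates, so $v_{w_{0}}(P_{1}(x_{w_{0}}))=v_{w_{0}}(P_{2}(x_{w_{0}}))=1$; hence $v_{w_{0}}(bP_{1}P_{2})=2$ is even, the equation is solvable, and Lemma \ref{hilbertsymbol} (with $s=a$ of valuation $0$, $t=P_{1}(x_{w_{0}})$ of odd valuation, $a$ a non-square modulo $w_{0}$) gives $(a,P_{1}(x_{w_{0}}))_{w_{0}}=-1$, i.e.\ $\mathrm{inv}_{w_{0}}\mathcal{A}(P_{w_{0}})=\tfrac12$. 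Gluing the points of the previous paragraph at all finite $w\neq w_{0}$ to $P_{w_{0}}$ yields an element of $V(\A_{L}^{\infty})$ (the surface being proper) at which $\sum_{w\nmid\infty_{L}}\mathrm{inv}_{w}\mathcal{A}=\tfrac12$. On the other hand, any $P\in V(L)$ satisfies $\sum_{w\in\Omega_{L}}\mathrm{inv}_{w}\mathcal{A}(P)=0$ by global reciprocity, and the archimedean terms vanish, so $\sum_{w\nmid\infty_{L}}\mathrm{inv}_{w}\mathcal{A}(P)=0$. Since $\mathrm{inv}_{w}\mathcal{A}(\cdot)$ is locally constant on $V(L_{w})$ and vanishes outside a finite set of places, $\{\sum_{w\nmid\infty_{L}}\mathrm{inv}_{w}\mathcal{A}=0\}$ is a proper clopen subset of $V(\A_{L}^{\infty})$ containing $V(L)$; hence $V(L)$ is not dense in $V(\A_{L}^{\infty})$.

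The main obstacle is the local bookkeeping of the middle two steps: arranging that $\mathrm{inv}_{w}\mathcal{A}$ is realisable as $0$ at every place other than $w_{0}$ while being genuinely $\tfrac12$ somewhere at $w_{0}$. The subtle place is $\p_{a}$, where $a$ is a uniformiser so that $L_{w}(\sqrt{a})/L_{w}$ is ramified and Lemma \ref{hilbertsymbol} cannot be applied directly --- this is precisely where condition~(2) is used, to force $b$ to be a local norm --- together with checking that no place outside $\{2,\p_{a},\p_{b},\p_{c}\}$ and $w_{0}$ contributes; the congruences (1)--(4) and the quadratic reciprocity Lemma \ref{reciprocity} are chosen exactly so that this closes up.
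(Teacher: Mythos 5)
Your proof is correct and proceeds by the same essential route as the paper: the same quaternion class (up to the irrelevant sign of $\sqrt{D}$, equivalent to the paper's modulo a constant), the same ramified place above $\p_c$, and the same conclusion via the Brauer--Manin pairing. The one place where the paper is noticeably leaner is in the global bookkeeping: instead of verifying that $\mathrm{inv}_w\mathcal{A}$ can be realised as $0$ at every finite place $w\neq w_0$ (which is where you spend real effort exploiting conditions (1)--(4)), the paper simply shows that the evaluation of the class at the single place $\mathfrak{P}=w_0$ takes \emph{both} values $0$ and $1/2$ --- by picking $x$ with $v_{\mathfrak{P}}(x)<0$ for one value and $x=c$ for the other. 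Combined with $V(\A_L)\neq\emptyset$ and the vanishing of $\mathcal{A}$ at archimedean places, this already forces $pr^\infty(V(\A_L)^{\Br})\subsetneq V(\A_L^\infty)$ without any analysis outside $\mathfrak{P}$, sidestepping entirely the need to confirm local triviality at $\p_a$, $\p_b$, dyadic places, and places lying over the other primes of ramification of $L|K$ (those dividing $c-e$), which in your write-up need a careful scan.
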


\begin{proof}
Over $L=K(\sqrt{D})$, the polynomial $x^{4}+2cx^{2}+d$ factorizes as $(x^2+c+\sqrt{D})(x^2+c-\sqrt{D})$. The affine surface $V_{0,L}$  is defined by $$y^{2}-az^{2}=b(x^2+c+\sqrt{D})(x^2+c-\sqrt{D}).$$

Consider $A\in\Br(L(V))$ the class of  the quaternion algebra $(a,x^{2}+c+\sqrt{D})$. As classes of quaternion algebras are of order $2$ in the Brauer group, $A$ equals to the class of $(a,1+\frac{c+\sqrt{D}}{x^{2}})$.
It equals  also to the class of $(a,b(x^{2}+c-\sqrt{D}))$ since $b(x^{2}+c-\sqrt{D})(x^{2}+c+\sqrt{D})=y^{2}-az^{2}$ lies in the image of the norm map $N_{L(V)(\sqrt{a})|L(V)}$. These three representations show that the residue of $A$ vanishes  at every point of codimension $1$ of $V_{L}$. Whence $A\in\Br(V_{L})$. Indeed, $A$ generates $\Br(V_{L})/\Br(L)\simeq \Z/2\Z$ \cite[Proposition 7.1.2]{Skbook}, but this will not help our argument.

Since $c|D$ but $c^2\nmid D$, $X^2-D\in K_{\p_c}[X]$ is an Eisenstein polynomial. 
The prime $\p_c$ is ramified  in the quadratic extension $L|K$. Denote by $\mathfrak{P}$ the prime ideal above $\p_c$. 
To show that weak approximation does not hold on $V_L$, according to the \BM pairing, it suffices to show that the evaluation of $A$ on $V_{L}(L_{\mathfrak{P}})$ takes both  $0$ and $1/2\in\Q/\Z$ as values. This is to say that there exist different points  $(x,y,z)\in V_{L}(L_{\mathfrak{P}})$ such that the Hilbert symbol $(a,x^{2}+c+\sqrt{D})_{\mathfrak{P}}$ can take both $1$ and $-1$ as values. As $\O_L/\mathfrak{P}=\O_K/{\p_c}$, $a$ is not a square modulo $\mathfrak{P}$. According to Lemma \ref{hilbertsymbol}, $(a,x^{2}+c+\sqrt{D})_{\mathfrak{P}}=1$ (resp. $-1$) if and only if $v_\mathfrak{P}(x^{2}+c+\sqrt{D})$ is even (resp. odd). 
Remember that $L_\mathfrak{P}(\sqrt{a})|L_\mathfrak{P}$ is unramified, $V_L$ admits $L_\mathfrak{P}$-points of coordinate $x$ if $v_\mathfrak{P}(b(x^4+2cx^2+d))$ is even \cite[Corollary V.1.2]{NeukirchANT}. 
Firstly, if $v_\mathfrak{P}(x)<0$, then both $v_\mathfrak{P}(b(x^4+2cx^2+d))=v_\mathfrak{P}(x^{4})$ and $v_\mathfrak{P}(x^{2}+c+\sqrt{D})=v_\mathfrak{P}(x^{2})$ are even. Therefore there exist $L_\mathfrak{P}$-points $(x,y,z)$ such that $(a,x^{2}+c+\sqrt{D})_{\mathfrak{P}}=1$.
Secondly,  as $v_\mathfrak{P}(c)=2$ implies that $v_\mathfrak{P}(\sqrt{D})=v_\mathfrak{P}(\sqrt{c(c-e)})=1$, we find that $v_\mathfrak{P}(c^2+c+\sqrt{D})=1$ is odd and $v_\mathfrak{P}(b(c^4+2c\cdot c^2+d))=2$ is even. Hence there exist   $L_\mathfrak{P}$-points $(x,y,z)=(c,y,z)$ such that $(a,x^{2}+c+\sqrt{D})_{\mathfrak{P}}=-1$. Therefore $\overline{V(L)}\subset V(\A_{L})^{\Br(V_{L})}\subsetneq V(\A_{L})$, weak approximation fails on $V_{L}$.

Moreover, the pull-back of the  class $A$ to $\Br(V_{L_{w}})$ is trivial for any archimedean place $w$ of $L$ since $a$ is a square in $L_{w}$. Thus archimedean places  make no contribution to the \BM pairing with $A$. Whence $pr^{\infty}(V(\A_{L})^{\Br(V_{L})})\subsetneq V(\A_{L}^{\infty})$, weak approximation off $\infty_{L}$ also fails on $V_{L}$.
\end{proof}

\section{Examples for non-invariance of weak approximation with Brauer\textendash Manin obstruction}\label{exBMWAsectiontitle}

The aim of this section is to adapt Poonen's construction \cite{Poonen} to find a $K$-variety $X$ such that it verifies weak approximation with \BMo off $\infty_{K}$ but $X_{L}$ fails the same property. In general, the  conclusion over $K$ is conditional on Conjecture \ref{Stollconj}. When $K=\Q$ we will give an explicit unconditional example. We will also show that over several quadratic number field $K$ the result is unconditional.

Over an arbitrary number field $F$, Poonen \cite{Poonen} constructed a fibration $X$ in Ch\^atelet surfaces over a curve possessing only finitely many rational points such that $X$ violates the Hasse principle with \'etale-\BM obstruction over the ground field. 
First of all, we recall his construction over $F$. Eventually $F$ will be the quadratic extension $L$ of $K$ in the previous section. We are able to make the construction be defined over the smaller field $K$. Moreover, we will show that the arithmetic of the fibration over $K$ behaves in contrast to that over $L$ if a key fiber is chosen to be one of the Ch\^{a}telet surfaces descripted in the previous section.

\subsection{Construction of Poonen}\label{poonenconstruction}\ \\
Start with a Ch\^{a}telet surface $Y_{\infty}$ defined over $F$ by the  equation $$y^{2}-az^{2}=P_{\infty}(x)$$with $a\in F^{*}$ and $P_{\infty}(x)\in F[x]$ a separable  polynomial of degree $4$. At the end, we will tell Poonen's choice of $Y_\infty$ and its consequences.  In the next paragraph, we will specify $F$ and a different suitable $Y_\infty$  for our purpose.

Take another separable polynomial $P_{0}(x)$ of degree $4$ and relatively prime to $P_{\infty}$. Let $\tilde{P}_{\infty}(w,x)$ and $\tilde{P}_{0}(w,x)$ be their homogenizations. 
Take a section  $$s'=u^{2}\tilde{P}_{\infty}(w,x)+v^{2}\tilde{P}_{0}(w,x)\in\Gamma(\P^{1}\times\P^{1},\O(1,2)^{\otimes2})$$
where $\P^{1}\times\P^{1}$ has homogeneous coordinates $(u:v,w:x)$. Let $Z'$ be the zero locus of $s'$ and let $R\subset\P^{1}$ be the (finite) branch locus of the first projection $Z'\subset\P^{1}\times\P^{1}\buildrel{pr^{1}}\over\To\P^{1}$. Then $\infty=(1:0)\notin R$ since $P_{\infty}$ is separable.

Let $\alpha':Y\to\P^{1}\times\P^{1}$ be the conic bundle given by $y^{2}-az^{2}=s'$. As a closed subvariety, $Y$ lies inside the projective plane bundle $\P(\E)\to\P^{1}\times\P^{1}$, where $\E=\O\oplus\O\oplus\O(1,2)$ is a $3$-dimensional vector sheaf on $\P^{1}\times\P^{1}$.
Denote by $\beta':Y\to\P^{1}$  the composition of $\alpha'$ and the first projection $pr^{1}$. Then the fiber  $\beta'^{-1}(\infty)$  is exactly the Ch\^{a}telet surface $Y_{\infty}$ at the beginning.

Choose a smooth projective curve $C$ over $F$ such that  $C(F)$ is finite and non-empty.   Take a dominant morphism $\gamma:C\to\P^{1}$ which is \'etale above $R$ and which maps all rational points to $\infty\in\P^{1}$. The \'etaleness is to ensure that $Z=Z'\times_{\P^{1}}C$ is a smooth curve. 
 Finally, take $\beta:X\to C$ to be the pull-back of $\beta':Y\to\P^{1}$ by $\gamma$.
Poonen showed that $X$ is a smooth proper geometrically integral variety over $F$ verifying the following property.
\begin{prop}[{\cite[Theorem 7.2]{Poonen}}]\label{poonenprop} 
The \BM set  $X(\A_{F})^{\Br}$ contains $Y_{\infty}(\A_{F})\times C(F)$.
\end{prop}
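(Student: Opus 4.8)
The plan is to follow Poonen's strategy \cite[\S 7]{Poonen} of verifying that every adelic point coming from $Y_\infty(\A_F)\times C(F)$ is orthogonal to the whole Brauer group of $X$. Fix a rational point $P\in C(F)$; since $\gamma(P)=\infty$, the fiber $\beta^{-1}(P)$ is isomorphic to the Ch\^atelet surface $Y_\infty$. An adelic point of $X$ lying over $(P,\ldots,P)\in C(\A_F)$ together with an adelic point $(M_v)_v\in Y_\infty(\A_F)$ gives a family $(N_v)_v\in X(\A_F)$ with $\beta(N_v)=P$ for all $v$ and $N_v$ mapping to $M_v$ under $\beta^{-1}(P)\cong Y_\infty$. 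For such a family, for any $\B\in\Br(X)$ one has $\sum_v\textup{inv}_v\big(\B(N_v)\big)=\sum_v\textup{inv}_v\big((\iota^*\B)(M_v)\big)$ where $\iota:Y_\infty\hookrightarrow X$ is the inclusion of the fiber over $P$. So it suffices to show $\iota^*\B\in\Br(Y_\infty)$ pairs trivially with $(M_v)_v$; by the class field theory reciprocity used implicitly throughout (the product formula for invariants), this reduces to showing that $\iota^*\B$ lies in the image of $\Br(F)\to\Br(Y_\infty)$, i.e. that $\Br(Y_\infty)/\Br(F)$ receives nothing new from restriction along $\iota$. More precisely, the key input is that a Ch\^atelet surface is $F$-rational after a quadratic extension, hence $\Br(Y_\infty)/\Br(F)$ is computed by residues along the degenerate fibers of its conic bundle structure, and one checks that $\iota^*\B$ has trivial residues there because $\B$ already had trivial residues on $X$ (purity).

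Concretely, the steps I would carry out are: (1) Set up the commutative square relating $\beta:X\to C$, the fiber inclusion $\iota:Y_\infty=\beta^{-1}(P)\hookrightarrow X$, and the conic bundle $\alpha':Y\to\P^1\times\P^1$; record that $\beta^{-1}(P)$ is literally the Ch\^atelet surface $Y_\infty$ from the start of the construction, using $\gamma(P)=\infty$ and $\infty\notin R$. (2) For an arbitrary $\B\in\Br(X)$ and the given adelic point $(N_v)_v$ with all $\beta(N_v)=P$, use functoriality of the \BM pairing to rewrite $\sum_v\textup{inv}_v(\B(N_v))$ as the \BM pairing of $\iota^*\B$ with $(M_v)_v\in Y_\infty(\A_F)$. (3) Show $\iota^*\B$ is constant modulo $\Br(F)$: since $Y_\infty$ is geometrically rational and becomes rational over $F(\sqrt a)$, its Brauer group modulo $\Br(F)$ is detected by residues at the codimension-one points lying over the discriminant locus of $P_\infty$; by the purity/residue exact sequence and the fact that $\B\in\Br(X)$ has no residues along divisors of $X$, the pullback $\iota^*\B$ has no residues along these divisors of $Y_\infty$ either, hence $\iota^*\B\in\im(\Br(F)\to\Br(Y_\infty))$. (4) Conclude that the pairing in step (2) is $\sum_v\textup{inv}_v(\lambda)=0$ for the constant value $\lambda\in\Br(F)$, by the reciprocity law of global class field theory, so $(N_v)_v\in X(\A_F)^{\Br}$.

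The main obstacle is step (3): one must be careful that restricting a Brauer class to a fiber does not create residues — this is where the smoothness of $X$, the fact that $P$ is a regular point of $C$ with $\beta$ smooth near $\beta^{-1}(P)$ (guaranteed because $\infty\notin R$ and $\gamma$ is chosen \'etale over $R$, so the total space is smooth and the fiber over $\infty$ is a good reduction fiber), and the compatibility of residue maps under the regular closed immersion $\iota$ all enter. In Poonen's setting one also has to make sure $\iota^*$ lands in $\Br(Y_\infty)$ and not merely $\Br(F(Y_\infty))$; this again follows from purity since $X$ is smooth and $Y_\infty$ is a smooth divisor in it. Once the geometry near the fiber $\beta^{-1}(P)$ is seen to be ``as nice as possible,'' the Brauer-group bookkeeping is routine, and the rest is the standard reciprocity argument. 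I would remark that this is exactly \cite[Theorem 7.2]{Poonen}, so in the write-up it suffices to cite that result after noting that our $Y_\infty$ satisfies Poonen's hypotheses (a Ch\^atelet surface over $F$ given by $y^2-az^2=P_\infty(x)$ with $P_\infty$ separable of degree $4$), which is visibly the case.
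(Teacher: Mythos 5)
The paper does not actually prove this proposition; it simply cites Poonen's Theorem~7.2, which is exactly what you suggest in your closing remark. On that level your proposal agrees with the paper. However, the proof sketch you offer before that remark contains a genuine gap in step~(3), and it is worth pointing out because it is a conceptual error and not a mere omission.

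You claim that $\Br(Y_\infty)/\Br(F)$ is ``detected by residues'' and that since $\iota^*\B$ has trivial residues along divisors of $Y_\infty$, it must lie in the image of $\Br(F)$. This is wrong. Having trivial residues along all codimension-one points of $Y_\infty$ is \emph{equivalent} to lying in $\Br(Y_\infty)$ (that is the content of purity, and it is automatic here because $\iota\colon Y_\infty\hookrightarrow X$ is a morphism of smooth proper $F$-varieties, so $\iota^*$ already lands in $\Br(Y_\infty)$). It does \emph{not} imply membership in $\Br(F)$. In fact, the opposite is visible in this very paper: in the proof of Proposition~\ref{prop2} the class $A=(a,x^2+c+\sqrt{D})$ is checked to have trivial residues along every codimension-one point of $V_L=Y_\infty$, hence $A\in\Br(V_L)$, and nevertheless $A$ generates $\Br(V_L)/\Br(L)\simeq\Z/2\Z$ and has non-constant evaluation. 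Your step (3) as written would declare $A$ constant, which is false. (You also misidentify the source of residue-triviality: residues of $\B$ along divisors of $X$ say nothing directly about residues of $\iota^*\B$ along divisors of $Y_\infty$, the latter being codimension two in $X$.)

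The genuine content of Poonen's Theorem~7.2 is precisely what your sketch fails to supply: one must show that the restriction map $\Br(X)\to\Br(Y_\infty)$ has image inside the image of $\Br(F)$, i.e., that the new Brauer class of the fibre $Y_\infty$ does \emph{not} extend to $X$. That is a statement about $\Br(X)$, not about residues on $Y_\infty$, and Poonen proves it by analysing the conic bundle structure $X\to C\times\P^1$ and the geometry of its degeneracy curve $Z$ (in particular the fact that the degeneracy locus of the two-parameter family is connected/irreducible, so no ``extra'' residue class survives on $X$ even though one appears on each special fibre). If you want to keep a genuine proof rather than a citation, that is the argument you would need to reproduce; the residue shortcut you propose does not work.
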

In the situation of \cite{Poonen}, the constant $a$ and the polynomial $P_{\infty}(x)$ are carefully chosen such that $Y_{\infty}(\A_{F})\neq\emptyset$ but $Y_{\infty}(F)=\emptyset$. It follows immediately that $X(F)=\emptyset$ but both $X(\A_{F})^{\Br}$ and $X(\A_{F})^{\rm{et},\Br}$ are non-empty.

\subsection{\BMo over $L$}\ \\
For our purpose, we specify $F$ and $Y_{\infty}$ now.
\begin{prop}\label{propL}
If $F=L$ and $Y_\infty$ is  the Ch\^{a}telet surface $V_{L}$ constructed in \S \ref{exWAsectiontitle}, then $X_{L}$ does not verify weak approximation with \BMo off $\infty_{L}$, i.e. $X(L)$ is not dense in $pr^{\infty}(X(\A_{L})^{\Br})\subset X(\A_{L}^{\infty})$.
\end{prop}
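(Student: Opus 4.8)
The plan is to transport the failure of weak approximation on $V_L$ (Proposition \ref{prop2}) up the fibration $\beta:X_L\to C_L$ using Poonen's structural result (Proposition \ref{poonenprop}). Recall from \S\ref{poonenconstruction} that with $F=L$ and $Y_\infty=V_L$, the fiber $\beta^{-1}(\infty)$ is precisely the Ch\^atelet surface $V_L$, and that $\gamma:C\to\P^1$ sends every point of $C(L)$ to $\infty$. Consequently $X(L)=\beta^{-1}(\infty)(L)=V_L(L)$ (as sets of points lying over rational points of $C$; there are no other $L$-points since $C(L)$ is the full rational locus and $\gamma(C(L))=\{\infty\}$). So rational points of $X_L$ are confined to the single fiber $V_L$, and the adelic points we must separate them from will be taken inside that same fiber.

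The key step is to exhibit an adelic point of $X_L$ that is orthogonal to $\Br(X_L)$ but not approximated by $X(L)$. For this I would take the Brauer class $A\in\Br(V_L)$ from the proof of Proposition \ref{prop2} — the quaternion algebra $(a,x^2+c+\sqrt D)$ — and an adelic point $(P_v)_v\in V_L(\A_L^\infty)$ with $P_{\mathfrak P}$ chosen so that the Hilbert symbol $(a,x^2+c+\sqrt D)_{\mathfrak P}=-1$ while $V_L(L)$ near $(P_v)_v$ is forced (by the \BM pairing with $A$, which is locally constant) to satisfy $(a,x^2+c+\sqrt D)_{\mathfrak P}=1$; Proposition \ref{prop2} guarantees both local behaviours occur. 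Viewing $(P_v)_v$ as an adelic point of $X_L$ via the inclusion $V_\infty=\beta^{-1}(\infty)\hookrightarrow X$, Proposition \ref{poonenprop} tells us $Y_\infty(\A_L)\times C(L)\subset X(\A_L)^{\Br}$, so after projecting away the archimedean places (using, as in Proposition \ref{prop2}, that $A$ is locally trivial at archimedean $w$ because $a$ is a square in $L_w$, so the archimedean adjustment does not disturb orthogonality) we obtain a point of $pr^\infty(X(\A_L)^{\Br})$. Since $X(L)=V_L(L)$ and the latter cannot approximate this point in the $\mathfrak P$-adic topology, $X_L$ fails weak approximation with \BMo off $\infty_L$.

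The main obstacle — and the step needing the most care — is checking that the adelic point one builds on $X_L$ genuinely lies in $X(\A_L)^{\Br(X_L)}$, not merely in $V_L(\A_L)^{\Br(V_L)}$: Proposition \ref{poonenprop} only asserts $Y_\infty(\A_L)\times C(L)\subset X(\A_L)^{\Br}$ as a whole, so I must make sure the chosen family $(P_v)_{v\in\Omega_L}$ — including the non-$\mathfrak P$ components and a completion to the archimedean places — actually defines a point of $Y_\infty(\A_L)$, which is possible because $V_L(\A_L)\neq\emptyset$ (Proposition \ref{prop1}, $V(\A_K)\neq\emptyset$, base-changed to $L$, together with the local solubility established in the proof of Proposition \ref{prop2}) and $C(L)\neq\emptyset$. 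A secondary point is to confirm that no $L$-rational points of $X$ lie outside the fiber over $\infty$: this is exactly the design of $\gamma$ (it maps $C(L)$ to $\infty$), so $X(L)\subset\beta^{-1}(\infty)(L)=V_L(L)$, and the reverse inclusion is clear since $\infty\in\gamma(C(L))$ and $\beta^{-1}(\infty)\cong V_L$. Once these compatibilities are in place, the contradiction with density is immediate from the locally constant nature of the evaluation of $A$.
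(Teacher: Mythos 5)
Your proposal follows essentially the same route as the paper: use Proposition~\ref{poonenprop} to place adelic points of the fiber $V_{L}$ (paired with a rational point of $C$) inside the Brauer\textendash Manin set of $X_{L}$, use that $C(L)$ is finite so that any approximating $L$-point of $X$ is confined to a fiber isomorphic to $V_{L}$, and then appeal to Proposition~\ref{prop2}. The paper packages this as a clean proof by contradiction, using Proposition~\ref{prop2} strictly as a black box: if $X_{L}$ satisfied weak approximation with Brauer\textendash Manin obstruction off $\infty_{L}$, then an \emph{arbitrary} $(y_{w})\in V_{L}(\A_{L}^{\infty})$ would be approximable by $V_{L}(L)$, i.e.\ $V_{L}$ would verify weak approximation off $\infty_{L}$ --- contradiction.

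Two places in your write-up need repair. First, $\beta$ maps $X$ to $C$, not to $\P^{1}$, so ``$\beta^{-1}(\infty)$'' does not parse and the identification $X(L)=V_{L}(L)$ is not literally correct: $X(L)=\bigsqcup_{P\in C(L)}\beta^{-1}(P)(L)$ is a finite disjoint union of copies of $V_{L}(L)$. This is harmless because $C(L)$ embeds discretely in $C(L_{\mathfrak{P}})$ (the paper makes the same point with ``$C(L)$ is finite thus discrete in $\prod_{w\in S}C(L_{w})$''), but the step should be stated. Second, and more substantively, the sentence claiming that ``$V_{L}(L)$ near $(P_{v})_{v}$ is forced by the Brauer\textendash Manin pairing (locally constant) to satisfy $(a,x^{2}+c+\sqrt{D})_{\mathfrak{P}}=1$'' has the logic inverted: local constancy says a rational point $\mathfrak{P}$-adically close to $P_{\mathfrak{P}}$ has the \emph{same} symbol $-1$, not $1$. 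The correct argument is to engineer $(P_{v})$ so that $\sum_{w\notin\infty_{L}}\mathrm{inv}_{w}A(P_{w})\neq 0$ (varying only $P_{\mathfrak{P}}$, using that both invariants occur there), and then observe that no $Q\in V_{L}(L)$ can match $(P_{v})$ at all finite places since the total invariant of $Q$ is $0$ and the archimedean invariants vanish. But this is precisely what the proof of Proposition~\ref{prop2} already delivers as its conclusion; re-deriving it inside this proof is unnecessary and is where your proposal goes astray. Citing Proposition~\ref{prop2} directly, as the paper does, avoids the issue entirely.
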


\begin{proof}
Suppose, for the sake of contradiction, that $X_{L}$ verifies weak approximation with \BMo off $\infty_{L}$. Take arbitrary family $(y_{w})\in V_{L}(\A_{L}^{\infty})$. As $V_{L}$ admits rational points at all archimedean places, we complete the family into $(y_{w})_{w\in\Omega_{L}}\in V_{L}(\A_{L})$. Take a point $P\in C(L)$, then $$(x_{w})_{w\in\Omega_{L}}=((y_{w})_{w\in\Omega_{L}},P)\in Y_{\infty}(\A_{L})\times C(L)\subset X(\A_{L})^{\Br}$$ by Proposition \ref{poonenprop}.
Let $S\subset\Omega_{L}\setminus\infty_{L}$ be any non-empty finite set of non-archimedean places. Then $(x_{w})_{w\in S}$ can be approximated by a rational points $x\in X(L)$. Therefore $(\beta(x_{w}))_{w\in S}=(P)_{w\in S}$ can be approximated by $\beta(x)\in C(L)$. As $C(L)$ is finite thus discrete in $\prod_{w\in S}C(L_{w})$, we find that $\beta(x)=P$. Whence $x\in\beta^{-1}(P)\simeq Y_{\infty}=V_{L}$ can approximate $(y_{w})_{w\in S}$, which contradicts to Propsition \ref{prop2}.
\end{proof}

\subsection{\BMo over $K$}\label{BMoverK}\ \\
For $F=L$, we explain how to make Poonen's construction be defined over the subfield $K$. And we will study its arithmetic over $K$.

As constructed in \S \ref{exWAsectiontitle}, $Y_{\infty}=V_{L}$ is actually obtained from the base change of a $K$-variety $V$ to $L$. If we also take $P_{0}(x)$ with coefficients in $K$, then the fibration $\beta':Y\to \P^{1}$ is defined over $K$. Now we need to find a smooth projective $K$-curve $C$  such that not only $C(K)$  but also  $C(L)$ is finite (and non-empty). According to Faltings' theorem, it is always the case if $C$ is of genus $\geq2$. But it seems unclear whether there exist such elliptic curves.

By Riemann\textendash Roch theorem, once $C(L)$ is a finite set of closed points of $C$, we can choose a rational function $\phi\in K(C)^{*}\setminus K^{*}$ such that $C(L)$ is contained in the set of poles of $\phi$. This give us a morphism $\gamma:C\to\P^{1}$ defined over $K$ mapping $C(L)$ to $\infty\in\P^{1}$.
Composing with a linear automorphism of $\P^{1}$ if necessary, we may require that $\gamma$ is \'etale above the finite set $R\subset \P^{1}$. 
Finally the pul-back $\beta:X\to C$ is also defined over $K$.

The following  conjecture  was stated by M. Stoll \cite[Conjecture 9.1]{Stoll07}. Similar questions were raised even earlier, independently by V. Scharaschkin \cite{Scharaschkin} and A. Skorobogatov \cite[\S 6.2]{Skbook}. The conjecture holds for $C=\P^{1}$, and according to Cassels\textendash Tate exact sequence it holds for elliptic curves $C=E$ provided that the Tate\textendash Shafarevich group $\sha(E,K)$ is finite.
\begin{conj}[Stoll]\label{Stollconj}
Let $C$ be a smooth projective curve defined over a number field $K$. Then $C$ verifies weak approximation with \BMo off $\infty_{K}$, i.e. $C(K)$ is dense in $pr^{\infty}(C(\A_{K})^{\Br})\subset C(\A_{K}^{\infty})$.
\end{conj}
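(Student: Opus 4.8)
Conjecture \ref{Stollconj} is open in general, so the plan is to record how the cases that this paper actually needs are established and to pinpoint where the genuine difficulty lies. The case $C=\P^{1}$ is immediate: $\Br(\P^{1}_{K})=\Br(K)$, hence $\P^{1}(\A_{K})^{\Br}=\P^{1}(\A_{K})$, and $\P^{1}$ satisfies weak approximation outright; a fortiori $\P^{1}(K)$ is dense in $pr^{\infty}(\P^{1}(\A_{K})^{\Br})$. So I concentrate on $C$ of genus $1$, and in particular on $C=E$ an elliptic curve, which is the case invoked for the unconditional statements of the paper.

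For an elliptic curve $E$ — or any abelian variety $A$ — I would appeal to Manin's computation of the \BM pairing through the Cassels\textendash Tate dual exact sequence. First, the pairing $E(\A_{K})\times\Br(E)\to\Q/\Z$ is locally constant in the archimedean variables, so it factors through the modified adelic space in which each archimedean factor $E(K_{v})$ is replaced by $\pi_{0}(E(K_{v}))$. Second, in that modified space the left kernel of the pairing equals $\overline{E(K)}$ as soon as $\sha(E,K)$ is finite: this is exactly what the Cassels\textendash Tate sequence gives, the divisible part of $\sha$ being the only thing that could enlarge the kernel. Third, applying $pr^{\infty}$ merely deletes the (compact) archimedean factors, so the image of $\overline{E(K)}$ is the closure of $E(K)$ in $E(\A_{K}^{\infty})$; thus $pr^{\infty}(E(\A_{K})^{\Br})=\overline{E(K)}$, which is weak approximation with \BMo off $\infty_{K}$. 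For a genus-$1$ curve $C$ not assumed to carry a rational point I would split cases: if $C(K)\neq\emptyset$ then $C\cong\Jac C$ over $K$ and the above applies; if $C(K)=\emptyset$ then the nonzero class of $C$ in the finite group $\sha(\Jac C,K)$ is detected by $\Br(C)$ via the Cassels\textendash Tate pairing, so $C(\A_{K})^{\Br}=\emptyset$ and the density assertion holds vacuously.

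The real obstacle is $C$ of genus $\geq2$, which is precisely why Theorem \ref{mainthm} is only conditional for a general number field. There is no group law to exploit; the known partial results (Scharaschkin, Poonen\textendash Voloch, Stoll) only reduce the conjecture for higher-genus $C$ to finiteness of $\sha(\Jac C,K)$ \emph{together with} a conjectural description of the closure of $C(K)$ inside $\Jac(C)(\A_{K})$, and both ingredients are themselves open; no unconditional argument is known for any curve of genus $\geq2$. For general $K$ one therefore keeps the conjecture as a hypothesis. Over $K=\Q$ (and the listed quadratic fields) one avoids it entirely by taking for $C$ an elliptic curve of rank $0$ over both $K$ and $L$ — so that $C(K)$ and $C(L)$ are finite and non-empty, as Poonen's construction demands — for which $\sha$ is known to be finite by Kolyvagin's theorem (respectively its known extensions to those quadratic fields); the elliptic-curve case above then applies unconditionally.
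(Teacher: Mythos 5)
This is a conjecture, not a theorem: the paper does not prove it, but only records (after stating it) that it holds for $C=\mathbb{P}^1$ and, via the Cassels--Tate dual exact sequence, for elliptic curves with finite Tate--Shafarevich group, which is exactly what the unconditional cases of Theorem \ref{exBMWA} invoke. Your proposal correctly identifies the status of the conjecture and records the same known cases (with a few extra details, e.g. the genus-one-without-rational-point discussion, which the paper does not need since it works with an explicit elliptic curve), so you take essentially the same approach as the paper.
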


\begin{prop}\label{propK}
The \BM set $X(\A_{K})^{\Br}$ is non-empty.
Assuming Conjecture \ref{Stollconj}, the variety $X_{K}$ verifies weak approximation with \BMo off $\infty_{K}$, i.e. $X(K)$ is dense in $pr^{\infty}(X(\A_{K})^{\Br})\subset X(\A_{K}^{\infty})$.
\end{prop}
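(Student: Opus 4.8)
The plan is to prove the two assertions separately. For non-emptiness of $X(\A_K)^{\Br}$, I would invoke Proposition \ref{poonenprop} (applied now over the base field $K$, which is legitimate since the whole construction $\beta:X\to C$ has been descended to $K$): it gives $Y_\infty(\A_K)\times C(K)\subset X(\A_K)^{\Br}$, where $Y_\infty = V$ is the Châtelet surface of \S\ref{exWAsectiontitle}. By Proposition \ref{prop1} we know $V(\A_K)\neq\emptyset$, and $C(K)$ is non-empty by construction; hence $X(\A_K)^{\Br}\supset V(\A_K)\times C(K)\neq\emptyset$.

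For the conditional weak approximation statement, I would run the same fibration argument as in the proof of Proposition \ref{propL}, but now \emph{with the implications reversed}, using Conjecture \ref{Stollconj} for $C$ over $K$ together with weak approximation for $V$ over $K$ (Proposition \ref{prop1}). Concretely: take a point $(x_w)_{w}\in X(\A_K)^{\Br}$ and a finite set $S\subset\Omega_K\setminus\infty_K$ of places at which we wish to approximate. First I would push $(x_w)$ down via $\beta$ to get $(\beta(x_w))_w\in C(\A_K)$; the key point, which I expect to need a small argument, is that this adelic point of $C$ lies in $C(\A_K)^{\Br}$ — this should follow from functoriality of the \BM pairing applied to $\beta^*:\Br(C)\to\Br(X)$, since any Brauer class on $C$ pulls back to one on $X$ and $(x_w)$ is orthogonal to all of $\Br(X)$. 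Granting this, Conjecture \ref{Stollconj} lets us find $Q\in C(K)$ approximating $(\beta(x_w))_{w\in S}$ as closely as we like (off $\infty_K$). By the implicit function theorem / smoothness of $\beta$ we may choose $Q$ so close that it is "compatible" with lifting: more precisely, I would argue that after shrinking, the fiber $\beta^{-1}(Q)$ carries local points near $x_w$ for $w\in S$, because $\beta$ is smooth near the $x_w$ and local solutions deform. Now $\beta^{-1}(Q)$ is a Châtelet surface over $K$; I would like it to be isomorphic to (or at least share weak approximation with) $V$, which it is not literally — only the fiber over $\infty$ is $V$. The cleanest fix: choose $Q=\infty$ outright. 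Indeed $\infty\in C(K)$ (the rational points of $C$, a fortiori $C(K)\subseteq C(L)$, were all sent to $\infty$ by $\gamma$), and $C(K)$ being finite and hence discrete in $\prod_{w\in S}C(K_w)$, the approximating rational point produced by Stoll's conjecture must actually equal $\infty$ once $S$ is enlarged to pin it down. Then $\beta^{-1}(\infty)\simeq V$, and weak approximation for $V$ over $K$ (Proposition \ref{prop1}) lets us approximate the fiber components $(x_w)_{w\in S}\in V(\A_K^\infty)$ by a point of $V(K)\subset X(K)$. This proves $X(K)$ is dense in $pr^\infty(X(\A_K)^{\Br})$.

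The main obstacle I anticipate is the step showing that an adelic point of $X$ orthogonal to $\Br(X)$ projects to an adelic point of $C$ orthogonal to $\Br(C)$, and — more delicately — controlling what happens at the finitely many places where $\beta(x_w)$ is \emph{not} already equal to $\infty$: one must verify that the discreteness of $C(K)$ in $\prod_{w\in S}C(K_w)$, combined with the fact that $C(K)$ maps into $\{\infty\}$, forces the approximating point to be $\infty$, and that one can then simultaneously lift to $V$. This is exactly the mirror image of the contradiction extracted in the proof of Proposition \ref{propL}, so the bookkeeping should go through; the one genuinely new ingredient is the invocation of Conjecture \ref{Stollconj} to produce the rational point on $C$ in the first place, together with functoriality of $\Br$ under $\beta$ to legitimise that invocation.
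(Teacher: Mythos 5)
Your proposal is correct and follows essentially the same route as the paper: for non-emptiness, apply Proposition \ref{poonenprop} with $F=K$ together with Proposition \ref{prop1}; for the weak approximation statement, push $(x_v)$ down by $\beta$, note $(\beta(x_v))_v\in C(\A_K)^{\Br}$ by functoriality, use Conjecture \ref{Stollconj} together with the \emph{finiteness} of $C(K)$ to deduce that the off-$\infty_K$ components $\beta(x_v)$ all coincide with a single $P\in C(K)$, observe $\beta^{-1}(P)\simeq V_K$ because $\gamma(P)=\infty\notin R$, and finish by Proposition \ref{prop1}. Two slips to correct in your write-up: $\infty$ is a point of $\P^1$, not of $C$ (what is true is that every $P\in C(K)$ maps to $\infty$ under $\gamma$, so any such fiber is $\simeq V_K$; you do not need to single one out); and the preliminary ``implicit function theorem / smoothness of $\beta$'' deformation argument should be dropped entirely, since $\beta$ is not smooth over $\gamma^{-1}(R)$ — fortunately you abandon it in favor of the ``cleanest fix,'' and the finiteness of $C(K)$ makes any deformation unnecessary because it forces $\beta(x_v)=P$ exactly, not merely approximately.
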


\begin{proof}
As the fibration $X$ is defined over $K$, we apply Proposition \ref{poonenprop} with $F=K$ to find that $X(\A_{K})^{\Br}$ contains $Y_{\infty}(\A_{K})\times C(K)$ which is non-empty by Proposition \ref{prop1}.

For any $(x_{v})_{v\in\Omega_{K}}\in X(\A_{K})^{\Br}$,  we have $(\beta(x_{v}))_{v\in\Omega_{K}}\in C(\A_{K})^{\Br}$ by  functoriality. As $C(K)$ is finite and dense in $pr^{\infty}(C(\A_{K})^{\Br})\subset C(\A_{K}^{\infty})$.
There exists a rational point $P\in C(K)$ such that for any place $v\in\Omega_{K}\setminus\infty_{K}$ we have $P=\beta(x_{v})$. In other words,  $x_{v}$ lies on the fiber $\beta^{-1}(P)\simeq V_{K}$ for $v\notin\infty_{K}$. The fiber $\beta^{-1}(P)$ admits rational points at every archimedean place by the construction of $V_{K}$. According to Proposition \ref{prop1}, there exists a $K$-rational point of $\beta^{-1}(P)\subset X$ approximating $(x_{v})_{v\notin\infty_{K}}$ as desired.
\end{proof}

\subsection{The main result}\ \\
To summarise, we obtain the following  result.
\begin{thm}\label{exBMWA}
Let $K$ be a number field. There exists a 3-fold $X$ over $K$ and a quadratic extension $L$ of $K$ such that 
\newline \indent - $X(\A_{K})^{\Br}\neq\emptyset$, and  if Conjecture \ref{Stollconj} is assumed to hold over $K$ then $X_{K}$ verifies weak approximation with \BMo off $\infty_{K}$;
\newline \indent - $X_{L}$ does not verify weak approximation with \BM obstruction off $\infty_{L}$. 
\end{thm}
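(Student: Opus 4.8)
The plan is to assemble Theorem \ref{exBMWA} directly from the pieces already established in Sections \ref{exWAsectiontitle} and \ref{exBMWAsectiontitle}, since the theorem is essentially a repackaging of Propositions \ref{propL}, \ref{propK} and the construction of \S\ref{BMoverK}. First I would fix the number field $K$ and invoke the \emph{Choice of the parameters} paragraph of \S\ref{exWAsectiontitle} to produce $a,b,c,e\in\O_K$ and hence the Ch\^atelet surface $V$ over $K$, together with the quadratic extension $L=K(\sqrt{D})$ where $D=c^2-d=c(c-e)$; by that paragraph $D$ is genuinely not a square in $K$, so $L|K$ is a quadratic extension, and by Proposition \ref{prop1} we have $V(\A_K)\neq\emptyset$ and $V_K$ satisfies weak approximation.

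Next I would run Poonen's construction of \S\ref{poonenconstruction} with this specific data: take $Y_\infty=V$ over $K$ (so $Y_{\infty,L}=V_L$ is the surface of \S\ref{exWAsectiontitle}), choose a second separable quartic $P_0(x)\in K[x]$ prime to the quartic $P_\infty(x)=b(x^4+2cx^2+d)$, form the section $s'$, the conic bundle $Y\to\P^1\times\P^1$ and $\beta':Y\to\P^1$ over $K$. Then I would choose a smooth projective curve $C$ over $K$ of genus $\geq 2$ with $C(K)\neq\emptyset$; by Faltings' theorem $C(L)$ is then also finite (and non-empty), which is exactly what \S\ref{BMoverK} needs. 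Using Riemann--Roch I pick $\phi\in K(C)^*\setminus K^*$ whose pole set contains the finite set $C(L)$, giving $\gamma:C\to\P^1$ over $K$ sending $C(L)$ to $\infty$, adjusted by a linear automorphism of $\P^1$ so that $\gamma$ is \'etale over the branch locus $R$. Pulling back $\beta'$ along $\gamma$ yields $\beta:X\to C$, a smooth proper geometrically integral $3$-fold over $K$.

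It then remains only to cite the two propositions. Proposition \ref{propK} gives $X(\A_K)^{\Br}\neq\emptyset$ unconditionally and, under Conjecture \ref{Stollconj} over $K$, that $X_K$ verifies weak approximation with \BMo off $\infty_K$. Proposition \ref{propL}, applied with $F=L$ and $Y_\infty=V_L$, gives that $X_L$ does not verify weak approximation with \BM obstruction off $\infty_L$. This establishes both bullet points of the theorem, so the proof is complete.

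The only genuine content not already isolated in a prior proposition is the \emph{existence} of a suitable curve $C$: one needs $C$ smooth projective over $K$, of genus $\geq 2$ (so that both $C(K)$ and $C(L)$ are finite by Faltings), with $C(K)\neq\emptyset$. This is the step I expect to require the most care, but it is not hard --- for instance one can start from any hyperelliptic curve over $K$ with a visible rational point and, if necessary, twist or adjust coefficients to keep a rational point while forcing genus $\geq 2$; alternatively one notes that the condition $C(K)\neq\emptyset$ together with high genus is clearly satisfiable since we are free to choose $C$. Everything else is bookkeeping: checking that the construction descends to $K$ (done in \S\ref{BMoverK}), that $P_0$ can be chosen in $K[x]$ separable and prime to $P_\infty$, and that $\gamma$ can be made \'etale over $R$ after composing with an automorphism of $\P^1$. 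I would therefore present the proof as: choose parameters, invoke \S\ref{poonenconstruction}--\S\ref{BMoverK} to build $X$ over $K$, then quote Propositions \ref{propK} and \ref{propL}.
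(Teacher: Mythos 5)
Your proposal is correct and follows exactly the paper's route: the theorem is indeed stated as a summary of \S\ref{poonenconstruction}--\S\ref{BMoverK}, with the two bullet points being precisely Propositions \ref{propK} and \ref{propL}, and the existence of a suitable base curve $C$ handled by taking any genus $\geq 2$ curve over $K$ with a rational point and invoking Faltings. The only thing worth noting is that you worry more than necessary about the choice of $C$ --- the paper treats it as immediate (e.g.\ a hyperelliptic curve $y^2=f(x)$ with $\deg f\geq 5$ and $f$ having a $K$-rational root), and the remaining bookkeeping (choosing $P_0\in K[x]$, adjusting $\gamma$ to be \'etale over $R$) is exactly as in \S\ref{BMoverK}.
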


\begin{rem}
The statement of Theorem \ref{exBMWA} remains valid if we replace the \BM obstruction by the \'etale-\BM obstruction, or equivalently the (iterated) descent obstruction according to C. Demarche \cite{Demarche09}, A. Skorobogatov \cite{Sk09etBMdesc}, and Y. Cao \cite{Cao2017}.
Indeed, in the proof we only need to apply \cite[Theorem 8.2]{Poonen} instead of Proposition \ref{poonenprop}.
\end{rem}

\subsection{An unconditional explicit example over $\Q$}\label{Qexample}\ \\
For $K=\Q$, we give an unconditional example following the constructions in \S \ref{exWAsectiontitle} and \S \ref{exBMWAsectiontitle}. 
The example is given by gluing affine pieces defined by explicit equations.

\subsubsection{Ch\^{a}telet surfaces}
The Ch\^{a}telet surface  $V_{/\Q}$ is given by 
$$y^{2}-17z^{2}=P_{\infty}(x)=137(x^{4}+10x^{2}-155)$$ with $a=17$, $b=137$, $c=5$, $e=-31$, $d=ce=-155$.  Then $D=c^{2}-d=180$ and $L=K(\sqrt{D})=\Q(\sqrt{5})$. The variety $V_{\Q}$ and the extension $L|\Q$ satisfy Theorem \ref{exWA}. The fiber $Y_{\infty}$ that appears later will be birational to $V$. And  $Y_{0}$ will be birational to the Ch\^{a}telet surface defined by
$$y^{2}-17z^{2}=P_{0}(x)=-137(x^{4}-155).$$

\subsubsection{Conic bundle}
Let $s'\in\Gamma(\P^{1}\times\P^{1},\O(2,4))$ be the section 
$$\begin{array}{r@{\quad=\quad}l}
s'& u^{2}\tilde{P}_{\infty}(w,x) + v^{2}\tilde{P}_{0}(w,x) \\
& 137\left[u^{2}(x^{4}+10x^{2}w^{2}-155w^{4}) - v^{2}(x^{4}-155w^{4}) \right],\end{array}$$
where $\tilde{P}_{\infty}(w,x)$ and $\tilde{P}_{0}(w,x)$  are homogenizations of $P_{\infty}(x)$ and $P_{0}(x)$ and $(u:v,w:x)\in\P^{1}\times\P^{1}$ are homogeneous coordinates. Let $\mathcal{E}=\O\oplus\O\oplus\O(1,2)$ be a vector sheaf on $\P^{1}\times\P^{1}$. The closed subvariety $Y$,  defined by the ``equation'' $y^{2}-17z^{2}=s'$,
of the total space of the  projective plane bundle $\P(\mathcal{E})\to\P^{1}\times\P^{1}$ is a conic bundle over $\P^{1}\times\P^{1}$. We will explain the word ``equation'' later. The fiber over $\infty$ (respectively $0$) of the composition $\beta'=pr^{1}\circ\alpha':\P(\E)\to\P^{1}\times\P^{1}\to\P^{1}$ is the Ch\^{a}telet surface $Y_{\infty}$ (respectively $Y_{0}$) above.

\subsubsection{Degenerate locus and ramification}
The degenerate locus $Z'\subset\P^{1}\times\P^{1}$ of the conic bundle is defined by
$$s'=137\left[u^{2}(x^{4}+10x^{2}w^{2}-155w^{4}) - v^{2}(x^{4}-155w^{4}) \right]=0.$$
According to Jacobian criterion, it is smooth since $P_{0}(x)$ and $P_{\infty}(x)$ are separable and  coprime to each other.  Then $Y$ is also smooth.
$Z'$ projects onto $\P^{1}$ via the first projection. The branch locus $R\subset\P^{1}$ is a closed subscheme of dimension $0$. It consists of $6$ geometric points $\pm1$ and $\displaystyle\pm\frac{\sqrt{31\pm\sqrt{-155}}}{6}$. In particular, $\infty\notin R$.

\subsubsection{Base curve} Let $C=E$ be the elliptic curve over $\Q$ defined by 
$$y'^{2}z'=x'^{3}-4x'z'^{2}$$
with homogeneous coordinates $(x':y':z')\in\P^{2}$. The CM elliptic curve $E$ and its quadratic twist $E^{(5)}$ by $L|\Q$ both have  $L$-functions that are non-vanishing at the complex $1$. According to K. Rubin \cite{Rubin87}, the Tate\textendash Shafarevich group $\sha(E,\Q)$ is finite. Stoll's Conjecture \ref{Stollconj} thus holds for $E_{/\Q}$.
According to J. Coates and A. Wiles \cite{CoatesWiles77}, the Mordell\textendash Weil groups  $E(\Q)$ and $E^{(5)}(\Q)$ are both finite, whence so is $E(L)$. Indeed, $E(L)=E(\Q)=\Z/2\Z\oplus \Z/2\Z=\{(0:1:0),(0:0:1),(\pm2:0:1)\}$.

\subsubsection{Base change morphism} We are defining a non-constant morphism $\gamma:E\to\P^{1}$ \'etale over $R\subset\P^{1}$ such that rational points in $E(L)$ are all mapped to $\infty$. 

Define $\gamma$ by $(x':y':z')\mapsto(y'^{2}+z'^{2}:x'y')$. It extends to $\P^{2}\setminus D\to\P^{1}$ where $D$ is a $0$-dimensional closed subscheme  consisting of $3$ geometric points $(1:0:0)$ and $(0:\pm\sqrt{-1}:1)$. It maps all points in $E(L)$ to $\infty$.

According to Jacobian criterion, one can check by hand that $\gamma$ is \'etale over $\pm1\in R$. It is much more complicated, but one can check with the help of a computer that  $\gamma$ is also \'etale over $\pm\frac{\sqrt{31\pm\sqrt{-155}}}{6}\in R$. Practically, it suffices to check only one of the four since they are conjugate by the Galois action. With some tricks in the explicit calculation, finally, the \'etaleness  reduces to decide whether two polynomials of degree $6$ and $12$ with integral coefficients (11 decimal digits) are coprime to each other.

\subsubsection{Ch\^{a}telet surface bundle}
As $\gamma:E\to\P^{1}$ is \'etale over the branch locus $R$ of $Z'\to\P^{1}$, the fiber product $Z=E\times_{\P^{1}}Z'$ is then smooth. Our $\Q$-variety $X$ is defined to be the pull-back of $Y$ by $\gamma$. Viewed as a conic bundle over $E\times\P^{1}$, the smooth variety $Z$ is its degenerate locus. Hence $X$ is also smooth by Jacobian criterion since the coefficients of $y^{2}$ and $z^{2}$ are non-zero constants.

The variety $X$ is also the restriction of the pull-back $\mathcal{X}$ of $Y$ by 
$$(\gamma,1):(\P^{2}\setminus D)\times\P^{1}\to\P^{1}\times\P^{1}$$
to $E\times\P^{1}\subset(\P^{2}\setminus D)\times\P^{1}$. It is defined explicitly by the following ``equations''.
$$\left\{ \begin{array}{r@{\quad=\quad}l}
y^{2}-17z^{2} &137t^{2}\left[(y'^{2}+z'^{2})^{2}(x^{4}+10x^{2}w^{2}-155w^{4}) - x'^{2}y'^{2}(x^{4}-155w^{4}) \right]\\
y'^{2}z' & x'^{3}-4x'z'^{2}\end{array} ,\right. $$
The second  is the equation of $E\subset\P^{2}$ with homogeneous coordinates $(x':y':z')$. 

The first ``equation'' needs more explanation. 
It is the ``equation'' of $\mathcal{X}$ viewed as a subvariety of $\P(\E')$. Here $\P(\E')\to(\P^{2}\setminus D)\times\P^{1}$ is the projective bundle  associated to $\E'=(\gamma,1)^{*}\E$.  The vector sheaf $\E'$ equals to $\O\oplus\O\oplus\O(2,2)$ if we identify $\Pic(\P^{2}\setminus D)$ with $\Pic(\P^{2})$. 
If one denotes by $s$ the section 
$$137\left[(y'^{2}+z'^{2})^{2}(x^{4}+10x^{2}w^{2}-155w^{4}) - x'^{2}y'^{2}(x^{4}-155w^{4})\right]\in\Gamma(\P^{2}\setminus D,\O(2,2)^{\otimes2})$$
where $(x':y':z',w:x)$ are coordinates of $(\P^{2}\setminus D)\times\P^{1}$, then $\mathcal{X}$ is the zero locus in $\P(\E')$ of   
$$1\oplus-17\oplus -s\in\Gamma(\P^{2}\setminus D,\O\oplus\O\oplus\O(2,2)^{\otimes2})\subset\Gamma(\P^{2}\setminus D,\textup{Sym}^{2}(\E')).$$
So $\mathcal{X}$ is given by the first ``equation'' with homogeneous ``coordinates'' $(y:z:t)$ of the fibers of the projective plane bundle.

We make precise what ``equation'' means.
Restricted to one of the affine subsets of $(\P^{2}\setminus D)\times\P^{1}$ given by non-vanishing of one of the coordinates of each factor, the vector sheaf $\E'$ is trivialised. Only on such a trivial fibration, the coordinates $(y:z:t)$ of the fiber $\P^{2}$ make sense.
For example, restricted to the affine $(\mathbb{A}^{2}\setminus D)\times\mathbb{A}^{1}$ given by $w\neq0$ and $z'\neq0$. We get  equations 
$$\left\{ \begin{array}{r@{\quad=\quad}l}
y^{2}-17z^{2} &137t^{2}\left[(1+y'^{2})^{2}(x^{4}+10x^{2}-155) - x'^{2}y'^{2}(x^{4}-155) \right]\\
y'^{2} & x'^{3}-4x'\end{array} ,\right. $$
describing  a Zariski open dense subset of  $X$ lying inside $(\mathbb{A}^{2}\setminus D)\times\mathbb{A}^{1}\times\P^{2}$ by dehomogenization.
Then $X$ is given by gluing  $6$ similar explicit quasi-projective varieties via obvious isomorphisms on their intersections.
From such equations, we get the affine piece presented in the introduction via a further dehomogenization by taking $t=1$. 

By the way, $(x', y', x,\ y:z:t)=(0, 0, 1,\ 48:36:1)$ is an explicit $\Q$-rational point on $X$, on which the existence of rational points is also clear from the construction.

\subsection{Quadratic  number fields}\ \\
Though the conclusion over $K$ of Theorem \ref{exBMWA} is conditional in general, it is not difficult to prove unconditional results over a specific quadratic  field  with the help of the results on elliptic curves by  B. Gross and D. Zagier \cite{GrossZagier}, V. A. Kolyvagin \cite{Kolyvagin}, and many others.

The following lemma should be well known, we state it here for the convenience of the reader.

\begin{lem}\label{easylemma}
Let $E$ be an elliptic curve defined over a number field $k$. Let $k(\sqrt{a})$ be a quadratic extension of $k$. Denote by $E^{(a)}$ the quadratic twist of $E$. 
\begin{itemize}
\item[-] If both $\sha(E,k)$ and $\sha(E^{(a)},k)$ are finite, then $\sha(E,k(\sqrt{a}))$ is also finite.
\item[-] If both $E(k)$ and $E^{(a)}(k)$ are finite, then $E(k(\sqrt{a}))$ is also finite.
\end{itemize}
\end{lem}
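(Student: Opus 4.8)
The plan is to exploit the standard fact that a quadratic extension $k' = k(\sqrt a)$ is, over $k$, governed by the decomposition $E(k') \otimes \Q \cong (E(k)\otimes\Q) \oplus (E^{(a)}(k)\otimes\Q)$ and, more robustly, by an exact sequence relating the Galois cohomology of $E_{k'}$ to that of $E$ and $E^{(a)}$ over $k$. Concretely, write $G = \Gal(k'|k) = \{1,\sigma\}$. For the Mordell\textendash Weil statement, I would first observe that $E(k')$ is a finitely generated abelian group on which $G$ acts, and that $\sigma$ acts on the $+1$-eigenspace (after $\otimes\Q$, or on a finite-index subgroup) as $E(k)$ and on the $-1$-eigenspace as $E^{(a)}(k)$, via the canonical isomorphism $E^{(a)} \cong E$ over $k'$ that is anti-equivariant for $\sigma$. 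Hence $E(k')$ has rank equal to $\operatorname{rank} E(k) + \operatorname{rank} E^{(a)}(k)$; if both summands are finite (rank $0$) then $E(k')$ has rank $0$, hence is finite. This is the short, essentially formal half.

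For the Tate\textendash Shafarevich statement I would use the restriction map $\sha(E,k) \to \sha(E,k')^{G}$ together with the inflation\textendash restriction sequence for $H^1(-,E)$. The key input is that the kernel and cokernel of $H^1(k,E) \to H^1(k',E)^G$ are controlled by $H^i(G, E(k'))$ for $i = 1,2$, which are finite since $G$ is finite and $E(k')$ is finitely generated (using the Mordell\textendash Weil half, or just that $E(k')$ is finitely generated regardless). Restricting to the subgroups cut out by local conditions, one gets that $\ker$ and $\coker$ of $\sha(E,k) \to \sha(E,k')^{G}$ are finite. Then I would bring in the quadratic twist: there is a second map, $\sha(E^{(a)},k) \to \sha(E,k')$, coming from the $k'$-isomorphism $E^{(a)}_{k'} \cong E_{k'}$ composed with corestriction, or alternatively one uses that $\sha(E,k')$, as a $G$-module, is ``built from'' $\sha(E,k)$ and $\sha(E^{(a)},k)$ up to finite kernels and cokernels. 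Combining, $\sha(E,k')$ is an extension of a subgroup of (something finite over) $\sha(E^{(a)},k)$ by (something finite over) $\sha(E,k)$, hence finite when both of the latter are.

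The cleanest way to package all of this is via descent by the $2$-element group $G$ applied to the exact sequence $0 \to E(k')/2 \to \Sel_2(E_{k'}) \to \sha(E_{k'})[2] \to 0$, or more directly: the natural map $\Sel_{?}(E,k) \oplus \Sel_{?}(E^{(a)},k) \to \Sel_{?}(E,k')$ has kernel and cokernel annihilated by $2$ and supported at the finitely many places ramified in $k'|k$ or of bad reduction, hence finite; finiteness of the two source Selmer-type groups (equivalent to finiteness of the respective $\sha$'s given the finite Mordell\textendash Weil hypothesis, or handled directly) then forces finiteness of $\sha(E,k')$. I expect the main obstacle to be purely bookkeeping: making the ``up to finite kernel and cokernel'' claims precise and uniform for both $\sha$ and the Mordell\textendash Weil group simultaneously, i.e. checking that the finite $G$-cohomology groups and the contribution of the ramified/bad places really are finite and really do bound the discrepancy, rather than any deep arithmetic input. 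No new results beyond finiteness of Galois cohomology of finitely generated modules over a finite group, and the standard compatibility of Selmer groups with the quadratic twist isomorphism over $k'$, are needed.
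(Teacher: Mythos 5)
Your approach is correct in outline but genuinely different from the paper's. The paper dispatches the lemma in one line: the Weil restriction $\textup{Res}_{k(\sqrt a)|k}\,E_{k(\sqrt a)}$ is isogenous over $k$ to $E\times E^{(a)}$ (Milne, \emph{Arithmetic duality theorems}, p.~185, Ex.~1), Shapiro's lemma identifies $\sha$ and the Mordell\textendash Weil group of the Weil restriction over $k$ with those of $E$ over $k(\sqrt a)$, and finiteness of $\sha$ is an isogeny invariant (Milne, ADT Lemma~I.7.1(b)). All the ``bookkeeping'' you anticipate — comparing Selmer groups over $k$ and $k'$, matching local conditions place by place, controlling $H^i(G,E(k'))$ for $G=\Gal(k'|k)$ — is exactly what the isogeny-invariance theorem absorbs in one stroke. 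Your hands-on route via the $\pm$-eigenspace decomposition of $E(k')$ (rank additivity) and via restriction/corestriction on $H^1(-,E)$ does work: for the $\sha$ half the cleanest way to close it is to observe that $(1+\sigma)$ and $(1-\sigma)$ land, up to the finite kernels of restriction, in the images of the finite groups $\sha(E,k)$ and $\sha(E^{(a)},k)$, so $2\,\sha(E,k')$ is finite, and then invoke finiteness of $\sha(E,k')[2]$ from $2$-descent. But you should flag that the step ``kernel and cokernel supported at finitely many places, hence finite'' is not automatic: being $2$-torsion and supported at finitely many places does not by itself give finiteness, and one really needs the concrete local computation (vanishing of $H^1(G_w,E(k'_w))$ at good unramified places, finiteness elsewhere) or, equivalently, finiteness of $2$-Selmer. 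That is the content the paper sidesteps by citing Milne, and it is the only place where your sketch is thinner than a proof.
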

\begin{proof}
It follows from the fact that $\textup{Res}_{k(\sqrt{a})|k}E_{k(\sqrt{a})}$ and $E\times E^{(a)}$ are isogenous \cite[page 185, Example 1]{Milne72} and the fact that the finiteness of the Tate\textendash Shafarevich group depends only on the isogenous class \cite[Lemma I.7.1(b)]{MilneADT}. 
\end{proof}

To the knowledge of the author, the only proved case of the following Bunyakovsky's Conjecture \ref{BunyakovskyConj} is 
Dirichlet's theorem on arithmetic progressions. As a generalisation of the conjecture to several polynomials, Schinzel's hypothesis (H) has turned out to be a crucial point to the question on  \BM obstruction, particularly for the fibration method,  see the work of Colliot-Th\'el\`ene and Sansuc \cite{CTSansuc82} and many other subsequent papers.
Bunyakovsky's conjecture  also plays a role in our constructions but in a different way.

\begin{conj}[Bunyakovsky]\label{BunyakovskyConj}
Let $f(x)\in\Z[x]$ be an irreducible polynomial of positive degree and of positive leading coefficient. 
Suppose that there does not exist a prime number dividing $f(n)$ for all $n\in\mathbb{N}$.
Then $f(n)$ has infinitely many prime values.
\end{conj}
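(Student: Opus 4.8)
The plan would be sieve-theoretic, and I should say at the outset that it succeeds only in the one case where the statement is already a theorem, namely $\deg f = 1$, which is exactly Dirichlet's theorem on primes in arithmetic progressions (the quantitative form of which is Proposition~\ref{dirichlet}). For $\deg f \geq 2$ the first step is to reformulate the hypothesis: ``no prime divides $f(n)$ for all $n$'' says precisely that for every prime $p$ the number $\omega(p)$ of residues $n \bmod p$ with $p \mid f(n)$ satisfies $\omega(p) < p$, which is exactly the condition making the local densities, and hence the singular series $\mathfrak{S}(f) = \prod_p (1 - \omega(p)/p)(1 - 1/p)^{-1}$, positive; the Hardy--Littlewood heuristic then predicts $\#\{n \leq x : f(n) \text{ prime}\} \sim \mathfrak{S}(f)\, x/(\deg f \cdot \log x)$, in particular infinitely many prime values. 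To turn this into a theorem one would sieve the sequence $a_n = f(n)$, $n \leq x$, by the primes up to $x^{\theta}$; the arithmetic input is the distribution of $f(n)$ in residue classes mod $q$, controlled by counting roots of $f$ modulo $q$, and this gives level of distribution $\theta < 1$ when $f$ is linear but only $\theta < 1/\deg f$ unconditionally for higher degree — the first genuine loss.

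With such a level of distribution in hand, the second step is to run a weighted sieve (Selberg's $\Lambda^2$ method combined with Buchstab iteration, or the Diamond--Halberstam--Richert machinery) to show that $f(n)$ has at most $r$ prime factors for infinitely many $n$, where $r = r(\theta, \deg f)$; for $f(n) = n^2 + 1$ this is Iwaniec's theorem with $r = 2$. The final step, pushing $r$ down to $1$, is where the whole programme breaks, and this is the step I expect to be the main obstacle: it is precisely the \emph{parity problem} of sieve theory, the fact (exhibited by Selberg's classical example) that a sieve supplied only with congruence information cannot distinguish integers with an even number of prime factors from those with an odd number, so no choice of sieve weights can deliver primes rather than, at best, products of two primes.

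Consequently the honest outcome of this plan is that it does \emph{not} go through: Conjecture~\ref{BunyakovskyConj} is open for every irreducible $f \in \Z[x]$ of degree $\geq 2$, and the only imaginable way to finish would be to inject a genuinely non-sieve input that breaks parity — as in the work on primes of the form $a^2 + b^4$ or $a^3 + 2b^3$, which exploit bilinear structure unavailable for a one-variable polynomial — or to obtain a direct analytic handle on $\sum_{n \leq x} \Lambda(f(n))$, neither of which is presently available. So the obstacle is not a missing technical lemma but the parity barrier itself, and the statement is correctly recorded as a conjecture; all that is needed, and used, elsewhere in the paper is the degree-one instance in Proposition~\ref{dirichlet}.
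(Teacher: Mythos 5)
You have correctly recognized that the statement is a \emph{conjecture}, not a theorem, and the paper offers no proof of it -- it merely records it for later use (the author even notes explicitly that the only settled case is $\deg f = 1$, i.e.\ Dirichlet's theorem, which is Proposition~\ref{dirichlet}). Your diagnosis of why a sieve attack cannot close the gap for $\deg f \geq 2$ -- losing the level of distribution to $\theta < 1/\deg f$, and then running into the parity barrier, so that the best unconditional output is almost-primes rather than primes -- is accurate and is the standard reason the conjecture remains open, but it is supplementary context rather than anything the paper contains. In short: there is no proof in the paper to compare against, your honest conclusion that the statement is unprovable by present methods is the right one, and the only thing the paper actually uses from this conjecture (in \S 4.6) is its truth as an assumption, invoked to produce suitable prime values of a quadratic polynomial in the quadratic-field construction.
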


Start with a quadratic number field $K$. We want to prove the existence of a $K$-variety $X$ and a quadratic extension $L$ of $K$ verifying  \emph{unconditionally} the conclusions of Theorem \ref{exBMWA}.

According to the proof of Theorem \ref{exBMWA}, it suffices to show the existence of parameters $a,b,c,e\in\O_{K}$ satisfying the conditions in \S \ref{exWAsectiontitle} and  an elliptic curve defined over $K$ such that Stoll's Conjecture \ref{Stollconj} holds for $E$ and such that $E(K(\sqrt{c^{2}-ce}))$ is finite.

To fix notation, let $K=\Q(\sqrt{\delta_{0}})$ with $\delta_{0}$ a square-free integer. The ring of integers $\O_{K}=\Z[x]/(\varphi)$ where $\varphi$ is a degree $2$ monic polynomial.

First, we need to find an elliptic curve $E$ over $\Q$ such that both $E$ and its quadratic twist $E^{(\delta_{0})}$ have $L$-functions that are non-vanishing at $s=1$. Thanks to Kolyvagin  \cite{Kolyvagin}, Gross and Zagier \cite{GrossZagier}, the   Tate\textendash Shafarevich groups and the Mordell\textendash Weil groups of these two curves are finite. Then $\sha(E,K)$ and $E(K)$ are finite by Lemma \ref{easylemma}. Stoll's Conjecture \ref{Stollconj} holds for $E_{K}$.

Second, we choose a square-free rational integer $\delta$ such that 
\begin{itemize}
\item[-] at least one prime factor $p$ of $\delta$  is odd and inert in $K$;
\item[-] both $E^{(\delta)}$ and $E^{(\delta\delta_{0})}$ are of  analytic rank $0$.
\end{itemize}
The first condition is equivalent to the irreducibility of $\varphi$ modulo $p$ which can be easily tested by quadratic reciprocity. If one believes the rank part of the BSD conjecture, the second condition is requiring  that $E^{(\delta)}$ and $E^{(\delta\delta_{0})}$ are of Mordell\textendash Weil rank $0$. According to B. Mazur and K. Rubin \cite[Corollary 1.9]{MazurRubin}, there are many quadratic twists of $E$ that have Mordell\textendash Weil rank $0$, but now we need to twist simultaneously  $E$ and $E^{(\delta_{0})}$ by a certain $\delta$. In practice, we often find such an integer $\delta$ satisfying these two conditions even in the set of prime numbers. Then $E(L)$ is finite by Lemma \ref{easylemma} where $L=K(\sqrt{\delta})$.

Finally, it remains to show the existence of $a,b,c,e$ satisfying the conditions in \S \ref{exWAsectiontitle} and such that $L=K(\sqrt{\delta})=K(\sqrt{c^{2}-ce})$.
Define $f\in\Z[x]$ to be $\displaystyle f(x)=\frac{-\delta}{c}x^{2}+c$ by taking $c=p$ or $-p$ such that $-\delta/c$ is positive. This polynomial is of content $1$ and is irreducible by Eisenstein's criterion.
If a natural prime $l$ divides $f(n)$ for all $n\in\mathbb{N}$, then $l\leq\deg(f)=2$ by \cite[Lemma 2]{CTSansuc82}. 
But $l|f(l)$ implies that $l|c=\pm p$, which is  impossible since $p$ is odd.
Therefore $f(x)$ verifies the assumption of  Bunyakovsky's conjecture, which predicts the existence of many natural numbers $n$ such that  $e=f(n)$ is a  prime  larger than $p$. 
If, moreover, one of such $e$'s is inert in $K$, we claim that the existence of the desired variety $X$ is ensured. Indeed,  $L=K(\sqrt{\delta})=K(\sqrt{c^{2}-ce})$ since $\delta n^{2}=c(c-e)$. And $c,e$ generate different prime ideals in $\O_{K}$. By Proposition \ref{dirichlet} and Lemma \ref{reciprocity}, we can complete
them to  parameters $a,b,c,e\in\O_{K}$ satisfying all the conditions in \S \ref{exWAsectiontitle}.

With this strategy,  we prove the existence in Theorem \ref{exBMWA} with unconditional conclusion over quadratic number fields $K$ in the following table (not an exhaustive list). In practice, even if in a very unlucky situation:  $\delta$ does not have a prime factor that is inert in $K$, one still has chance to find appropriate parameters, see the last line of the table.
\medskip

\begin{tabular}{ | l | c | c | c | c | l |}
    \hline
    \ \ \ \ $K$ & $\delta$  & $c$ & $e$ & $n$ & \ \ \ \ \ $E$ \\ 
    \hline\hline
    $\Q(\sqrt{3})$ & $11$ & $-11$ & $5$ & $4$ & $y^{2}=x^{3}-x$\\ 
    \hline
    $\Q(\sqrt{-3})$ & $-11$ & $11$ & $47$ & $6$ & $y^{2}=x^{3}-x$\\
    \hline
    $\Q(\sqrt{-19})$ & $-3$ & $3$ & $67$ & $8$ & $y^{2}=x^{3}-x$\\
    \hline
    $\Q(\sqrt{-5})$ & $13$ & $-13$ & $131$ & $12$ & $y^{2}=x^{3}-4x$\\
    \hline
    $\Q(\sqrt{-1})$ & $5$ & $2+\sqrt{-1}$ & $-6+5\sqrt{-1}$ & $2$ & $y^{2}=x^{3}-4x$\\
    \hline
\end{tabular}

\section{Remarks on related questions}
\subsection{Remarks on the other direction}
In the other direction to answer negatively to Question \ref{mainquestion}, we also seek examples that fail a certain weak approximation property over the ground field but verify the property over a certain finite extension. It turns out that existing examples feed our needs.

Over an arbitrary number field $K$, due to Poonen \cite[Proposition 5.1]{Poonen09} there exists a Ch\^atelet surface $V$ which violates the Hasse principle (and explained by the \BM obstruction), a fortiori it fails weak approximation. Suppose that $V$ is defined by $$y^{2}-az^{2}=P(x).$$
Then $a\in K^{*}$ must not be a square and the polynomial $P(x)\in K[x]$ must be reducible over $K$ \cite[Theorem 8.11]{chateletsurfaces}.
Let $L$ be the quadratic extension $K(\sqrt{a})$ of $K$. Then $V$ is a rational variety over $L$, it verifies weak approximation over $L$.

Using $V$ as $Y_{\infty}$ in the construction of \S \ref{poonenconstruction} with $F=K$, Poonen show that $X_{K}$ violates Hasse principle with (\'etale-)\BM obstruction over $K$, a fortiori it fails weak approximation with (\'etale-)\BM obstruction off $\infty_{K}$. Let $L$ be as above. If the base curve $C$ admits only finitely many $L$-rational points which are all mapped to $\infty\in\P^{1}$ by $\gamma$, then one can argue as in Proposition \ref{propK} to show that $X_{L}$ verifies weak approximation with (\'etale-)\BMo off $\infty_{L}$ assuming Conjecture \ref{Stollconj} over $L$. As explained in \S \ref{Qexample}, for $K=\Q$ the last statement is unconditional for a certain elliptic curve.

Note that in these examples neither $V$ nor $X$  possesses $K$-rational points. One may ask for examples possessing $K$-rational points. Indeed, it is even easier, it suffices to define the Ch\^{a}telet surface $V$ by $$y^{2}-az^{2}=-(x^{2}+b)(x^{2}-b)$$
with $a,b\in\O_{K}$ generating different prime ideals such that  $a>0$ with respect to all real places.
Then  $(x,y,z)=(0,b,0)$ is a rational point  on $V$ and hence $X(K)$ is non-empty. We know that $X_{K}$ fails the \BMo off $\infty$ as long as  $V_{K}$ does not verify weak approximation off $\infty_{K}$. To show the latter, the key argument is similar to that in Proposition \ref{prop2}:
if $A\in\Br(K(V))$ is the class of the quaternion algebra $(a,x^{2}+b)$, then the evaluation of $A$ on $V(K_{v_{b}})$ takes both $0$ and $1/2\in\Q/\Z$ as values (for the value $1/2$, one considers $x=b$). 
It is also clear that over the extension $L=K(\sqrt{a})$, $V_{L}$ is a rational variety and $X_{L}$ satisfies weak approximation with \BM obstruction if $C(L)$ is finite and mapped to $\infty$.

\subsection{Over global function fields}
Poonen's construction also works over odd characteristic global function fields  and B. Viray extended the result to characteristic $2$ \cite{Viraychar2}.  Moreover, if $C$ is a smooth projective curve of genus at least $2$ whose Jacobian $J$ satisfies some mild conditions, Conjecture \ref{Stollconj} for $C$ has been proved by B. Poonen and J. F. Voloch \cite{PoonenVoloch} even without finiteness assumption of the Tate\textendash Shafarevich group of $J$. As explained to the author by Voloch, his paper \cite{Voloch95} showed that those conditions on $J$ are satisfied for  many curves.  Poonen can explicitly construct  such genus $2$ curves in odd characteristic case using the method in \cite[\S 3.2]{HLP}. We may expect our main result to hold unconditionally over a global function field (at least in odd characteristic case). However, the proof of Proposition \ref{prop1}  uses the fact that the \BMo is the only obstruction to weak approximation for Ch\^atelet surfaces. This result was stated in \cite[Theorem 8.11]{chateletsurfaces} only for number fields. It is not clear if the analog holds over odd characteristic global function fields, or even more difficult in characteristic $2$.

\subsection{A related question}
Though the answer to Question \ref{mainquestion} is negative, we may still ask the following question.
\begin{ques}
Under which finite extensions, weak approximation properties are invariant?
\end{ques}
The question is easy for the families of varieties  $X_{K}$ in \S \ref{exBMWAsectiontitle} and $Y_{\infty,K}$ in  \S \ref{BMoverK} (or $V_{K}$ in \S \ref{exWAsectiontitle}). Over any finite extension $K'$ that is linearly disjoint from $K[x]/(P_{\infty}(x))$ over $K$, the polynomial $P_{\infty}(x)$ is still irreducible.
Then the varieties $V_{K'}=Y_{\infty,K'}$ and $X_{K'}$ preserve  the same property as on the ground field (Propositions \ref{prop1} and \ref{propK}). 
But it seems hopeless to answer to the question in general.

\bigskip

\begin{footnotesize}
\noindent\textbf{Acknowledgements.}  The author would like to thank B. Poonen, B. Viray, and J. F. Voloch for having explained  their work to the author.


\end{footnotesize}


\bigskip

\bibliographystyle{alpha}
\bibliography{mybib1}

\end{document}